\documentclass[a4paper,reqno, 11pt]{amsart}
\usepackage{mathrsfs}
\usepackage{amssymb}
\usepackage[usenames,dvipsnames,svgnames,table]{xcolor}
\usepackage{url}

\usepackage[T1]{fontenc}

\setlength{\marginparwidth}{2cm}

\makeatletter
\def\thm@space@setup{
\thm@preskip=2mm
\thm@postskip=0mm
}
\makeatother

\usepackage{boldline}

\let\leq\leqslant

\let\subset\subseteq

\let\epsilon\varepsilon

\usepackage[centertags]{amsmath}
\usepackage{amsfonts}
\usepackage{amsthm}
\usepackage{times}

\usepackage{hyperref}
\usepackage[capitalise, noabbrev]{cleveref}

\hypersetup{
    colorlinks,
    linkcolor={RoyalBlue},
    citecolor={OliveGreen},
    urlcolor={blue!80!black}
}

\linespread{1.18}
\usepackage{enumerate} %
\usepackage{xcolor}
\usepackage[margin=1.25in]{geometry}
\usepackage{centernot}
\usepackage{csquotes}
\usepackage[symbol]{footmisc}
\usepackage{todonotes}

\usepackage{tabularx}

\definecolor{myblue1}{RGB}{30, 144, 255}
\definecolor{myblue2}{RGB}{0, 135, 245}
\definecolor{myblue3}{RGB}{0, 126, 235}
\definecolor{myblue4}{RGB}{36, 70, 119}

\newtheorem{theorem}{Theorem}
\newtheorem*{theorem*}{Theorem}

\newtheorem{corollary}[theorem]{Corollary}
\newtheorem{lemma}[theorem]{Lemma}

\newtheorem{problem}[theorem]{Problem}

\theoremstyle{definition}


\newcommand{\calS}{\mathcal{S}}

\newcommand{\calT}{\mathcal{T}} 
\newcommand{\calR}{\mathcal{R}}

\newcommand{\N}{\mathbb{N}}

\newcommand{\Z}{\mathbb{Z}}

\usepackage{mathtools}
\DeclarePairedDelimiter\set{\{}{\}}

\sloppy
\raggedbottom

\begin{document}

	\title{Counting Unions of Schreier Sets}

	\author{Kevin Beanland}
	\author{Dmitriy Gorovoy}
	\author{J\c{e}drzej Hodor}
	\author{Daniil Homza}

	\address{(K. Beanland) Department of Mathematics, Washington and Lee University, Lexington, VA 24450.}
	\email{beanlandk@wlu.edu}
	\address{(D. Gorovoy, J. Hodor, D. Homza)  Mathematics Department,
  Faculty of Mathematics and Computer Science, Jagiellonian University, Krak\'ow, Poland.}
	\email{dimgor2003@gmail.com,jedrzej.hodor@gmail.com,daniil.homza.work@gmail.com}
	
	\thanks{2010 \textit{Mathematics Subject Classification}. Primary: }
	\thanks{\textit{Key words}: }
	
	\thanks{J.\ Hodor is partially supported by a Polish National Science Center grant (BEETHOVEN; UMO-2018/31/G/ST1/03718).}
	
	
	\begin{abstract}
    A subset of positive integers $F$ is a Schreier set if it is non-empty and $|F|\leqslant \min F$ (here $|F|$ is the cardinality of $F$). 
    For each positive integer $k$, we define $k\calS$ as the collection of all the unions of at most $k$ Schreier sets.
    Also, for each positive integer $n$, let $(k\calS)^n$ be the collection of all sets in $k\calS$ with the maximum element equal to $n$.
    It is well-known that the sequence $(|(1\calS)^n|)_{n=1}^\infty$ is the Fibbonacci sequence.
    In particular, the sequence satisfies a linear recurrence.
    We generalize this statement, namely, we show that the sequence $(|(k\calS)^n|)_{n=1}^\infty$ satisfies a linear recurrence for every positive $k$.
  \end{abstract}
	
	\maketitle
	
	
\section{Introduction}

    A subset $F$ of natural numbers\footnote{We write $\mathbb{N}$ for the set of all positive integers, and we refer to members of $\mathbb{N}$ as natural numbers.} is called a \emph{Schreier set} if it is non-empty and $|F| \leq \min F$ (here $|F|$ is the cardinality of the set $F$). Let $\calS$ denote the family of all Schreier sets and $\mathcal{S}^n$ be the set of Schreier sets $F$ with $\max F=n$. The family $\calS$, which we call the \emph{Schreier family}, was introduced in the 1930s in a seminal paper of J\'ozef Schreier \cite{Schreier1930}. In this paper, Schreier solved a problem in Banach space theory posed to Banach and Saks. It is now well-established that Schreier sets and their generalizations have deep connections to the norm convergence of convex combinations of weakly null sequences  \cite{Alspach-Argyros, AT-Book, Odell-Ordinal} and are thus of fundamental importance in Banach space theory.
    
    In this paper, we focus on the combinatorial properties of the Schreier sets. Recently, counting various generalizations of Schreier sets has received a lot of attention and can be seen by H.V. Chu and coauthors' work \cite{ Hung3, Hung4,  Hung5, Hung1}. Research in this direction began with the simple observation by an anonymous\footnote{The blogger was later to be revealed to be Alistair Bird who discovered this connection while writing his Ph.D. thesis in Banach space theory.} blogger that  $(|\mathcal{S}^n|)_{n=1}^\infty$ is the Fibonacci sequence.
    This fact follows from the observation that the set elements of $\mathcal{S}^n$ which contain $n-1$ can be put in bijection with $\mathcal{S}^{n-1}$, and the set of elements of $\mathcal{S}^n$ not containing $n-1$ can be put in bijection with  $\mathcal{S}^{n-2}$. 
    
    
    It is natural to ask if, for various modified Schreier families, the analogous sequence (counting members with a fixed maximal element) is a linear recurrence sequence. A natural and nontrivial modification of $\calS$ is to fix natural numbers $p$ and $q$ and consider the family 
    $$\calS_{p, q}=\{F \subset \mathbb{N}: q  \min F \geqslant p  |F|\}$$ 
    In \cite{Hung3}, Beanland, Chu, and Finch-Smith proved that the sequence $(|\calS^n_{p,q}|)_{n=1}^\infty$ is a linear recurrence sequence and gave a compact recursive formula (this builds on the earlier work \cite{Hung6}).
    
   In the current paper, we consider, for each natural number $k$, the set $k\calS$ of all subsets of $\mathbb{N}$ that can be written as a union of at most $k$ many Schreier sets. These sets were recently studied in a Banach space theory context in \cite{Beanland-Hodor} by the first and third authors of the present paper. 
   
   In the present work, we prove that for each $k$ the sequence $\{|(k\calS)^n|\}_{n=1}^\infty$ satisfies a specific recurrence relation that is generated by the Fibonacci recurrence in a natural way. 
In order to describe these recurrences we recall a few basic notions. A sequence $(a_n)_{n=1}^\infty$ satisfies the linear recurrence with coefficients $c_k,c_{k-1},\cdots,c_0$ (we assume that $c_k\not=0$) if for each $n \in \mathbb{N}$, we have
\[ c_k a_{n+k}+c_{k-1}a_{n+k-1} + \cdots + c_1 a_{n+1}+ c_0 a_n = 0\]
The {\em characteristic polynomial} of the linear recurrence is given by
\[ p(x)=c_kx^k +c_{k-1}x^{k-1} + \cdots +c_1 x +c_0.\]

We now introduce a sequence of polynomials $(p_k(x))_{k=1}^\infty$ that will enable us to state our main result. Let $p_0(x)=x-1$. This is the characteristic polynomial of the constant sequence. Suppose that $p_k(x)$ has been defined for some non-negative integer $k$, and let 
\begin{equation}
p_{k+1}(x):= p_k(x(x-1)). \label{defn:p_k}
\end{equation}
Below we have computed $p_k(x)$ for each $k \in \{1,2,3\}$.
\begin{enumerate}
    \item $p_1(x) = x(x-1)-1= x^2-x-1$, the characteristic polynomial of the Fibonacci recursion.
    \item $p_2(x) = x^2(x-1)^2-x(x-1) - 1= x^4-2x^3+x -1$.
    \item $p_3(x) = x^8 - 4 x^7 + 4 x^6 + 2 x^5 - 5 x^4 + 2 x^3 + x^2 - x - 1.$
\end{enumerate}
Notice that $p_k(x)$ has degree $2^k$. For all natural numbers $k$ and $n$, set $s_{k,n}:= |(k\calS)^n| $. We can now state our main theorem. 

    \begin{theorem} \label{thm:lin-rec}
         For each $k\in \mathbb{N}$, the sequence $(s_{k,n})_{n=1}^\infty$ satisfies the linear recurrence with characteristic polynomial $p_k(x)$.
    \end{theorem}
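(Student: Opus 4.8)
The plan is to replace the definition of $k\calS$ by a rigid normal form, and then induct on $k$ at the level of generating functions, with the doubling $\deg p_{k+1}=2\deg p_k$ reflecting a combinatorial expansion step.

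First I would establish a \emph{greedy normal form}. Since a subset of a Schreier set is again Schreier, $k\calS$ is hereditary, so a set lies in $k\calS$ precisely when it can be \emph{partitioned} (not merely covered) into at most $k$ Schreier sets. For $E=\{a_1<\cdots<a_m\}$ I would then show that the greedy partition which reads $E$ from below and repeatedly strips off a maximal Schreier initial segment — the block that starts at the current least element $a_i$ and takes the next $a_i$ elements in increasing order — minimizes the number of parts. This is a standard exchange argument: given any optimal Schreier partition, swaps that move the smallest uncovered elements into the part containing $a_1$ preserve both the Schreier property (they only lower minima of the part being filled and only raise minima elsewhere) and the number of parts, after which monotonicity of the optimum under passing to subsets finishes the induction. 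Consequently $E\in k\calS$ iff the sorted sequence of $E$ splits into at most $k$ consecutive runs, each a Schreier set, and in the canonical greedy splitting every block except possibly the last is \emph{full} (size equal to its minimum). This yields a bijection between $(k\calS)^n$ and sequences of at most $k$ consecutive Schreier blocks with maximum $n$, all but the last full.

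Next I would pass to $G_k(x):=\sum_{n\ge 1}s_{k,n}x^n$ and recast the theorem as the assertion that $G_k$ is rational with denominator $D_k(x):=x^{2^k}p_k(1/x)$ and numerator of degree $<2^k$. The minima of consecutive full blocks at least double, since a full block at $\mu_i$ has $\mu_i$ elements all $\ge\mu_i$, forcing the next minimum $\mu_{i+1}>2\mu_i-1$. I would therefore peel off the bottom block: a set in $(k{+}1)\calS$ is one full Schreier block with some minimum $\mu_1$ sitting strictly below a set of $k\calS$ supported on larger values. Encoding the bottom block together with the shift it induces on the remaining values, the goal is a functional equation of the shape
\[
G_{k+1}(x)=\Psi(x)+\Phi(x)\,G_k\!\left(\tfrac{x^2}{1-x}\right),
\]
with $\Psi,\Phi$ explicit and having only powers of $(1-x)$ in their denominators. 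The substitution $y=\tfrac{x^2}{1-x}=x^2+x^3+\cdots$ is exactly the rule ``replace one step of the level‑$k$ picture by a block of length $\ge 2$,'' which is the source of the map $x\mapsto x(x-1)$ on characteristic polynomials: writing $D_k(y)=y^{2^k}p_k(1/y)$ one computes
\[
(1-x)^{2^k}D_k\!\left(\tfrac{x^2}{1-x}\right)=x^{2^{k+1}}p_k\!\left(\tfrac{1-x}{x^2}\right)=x^{2^{k+1}}p_{k+1}(1/x)=D_{k+1}(x),
\]
in agreement with the recursive definition $p_{k+1}(x)=p_k(x(x-1))$. Substituting $G_k=N_k/D_k$ into the functional equation and clearing the $(1-x)$‑powers then gives $G_{k+1}=N_{k+1}/D_{k+1}$ with $\deg N_{k+1}<2^{k+1}$, which is precisely the claim that $(s_{k+1,n})$ satisfies the recurrence with characteristic polynomial $p_{k+1}$. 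The base case is the Fibonacci recurrence $p_1(x)=x^2-x-1$, and at the end I would check the first $2^k$ values of $(s_{k,n})$ directly so that the recurrence holds from $n=1$ rather than only eventually.

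The main obstacle is the bottom‑block peeling. The Schreier condition $|F|\le\min F$ is not translation invariant, so removing the bottom block couples it to the rest of $E$ through the \emph{absolute} values, and the induced ``shift of the remaining values'' is not a clean index shift; organizing the bookkeeping so that the only denominators introduced are powers of $(1-x)$, and that the remaining factor is genuinely $G_k$ evaluated at $\tfrac{x^2}{1-x}$, is where the real work lies. At the level of transfer operators this is the statement that the level‑$(k{+}1)$ operator can be conjugated into the block form $\left(\begin{smallmatrix} I & I \\ M_k & 0\end{smallmatrix}\right)$, whose characteristic polynomial is $\det\!\big(x(x-1)I-M_k\big)=p_k(x(x-1))$; verifying this nesting, equivalently the functional equation above, is the crux of the argument.
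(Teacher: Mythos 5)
Your normal-form lemma is sound (it is exactly the paper's greedy decomposition $E_1(F),E_2(F),\dots$ into consecutive Schreier blocks, all full except possibly the last), and your algebraic observation that the substitution $y=x^2/(1-x)$ carries $D_k$ to $D_{k+1}$ is a correct restatement of $p_{k+1}(x)=p_k(x(x-1))$. But the argument has a genuine gap exactly where you flag it: the functional equation $G_{k+1}(x)=\Psi(x)+\Phi(x)\,G_k(x^2/(1-x))$ is never derived, and there is a structural reason to doubt it can be derived in this one-variable form. After you peel off the bottom full block with minimum $\mu_1$ and maximum $M_1\geqslant 2\mu_1-1$, the remainder $F'$ must lie in $k\calS$ and be supported strictly above $M_1$; since the Schreier condition is not translation invariant, the number of such $F'$ with prescribed maximum depends on both that maximum and the threshold $M_1$ (equivalently, on how much room $F'$ has left), and is not a shifted copy of $s_{k,\cdot}$. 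The peeled object is therefore an irreducibly two-parameter count, and no substitution into the one-variable series $G_k$ can record it: $\Psi$ and $\Phi$ would have to absorb this dependence, which is precisely the content you leave unproved. Without an explicit identification of $\Psi,\Phi$ and a combinatorial verification of the equation, the induction does not close.

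The paper resolves this obstruction by making the second parameter explicit: $d_k(F)$, the number of elements that can still be appended, giving the array $r^d_{k,n}$. This array is Pascal-like in $(n,d)$ (Lemma \ref{lem:pascal-rec}); the passage from level $k$ to level $k+1$ is the identity $r^{n}_{k+1,n+1}=\sum_{i\leqslant n}r^0_{k,i}$ (Lemma \ref{move tables}), which ties the far diagonal at level $k+1$ to the counts of \emph{maximal} $k$-Schreier sets rather than to $s_{k,\cdot}$; and the map $p(x)\mapsto p(x(x-1))$ arises from transferring a recurrence from the diagonals of a Pascal-like array to its first column (Lemmas \ref{lem:pascal-fm} and \ref{got it right}) --- this is where your substitution identity reappears in a provable form. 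Finally $s_{k,n}=2s_{k,n-1}-r^0_{k,n-1}$ (Lemma \ref{lem:s=2s+r}) converts the result for $r^0$ into the theorem; note also that the paper peels at the top element $n$ rather than at the bottom block. If you want to salvage the generating-function plan, you would need a bivariate series tracking $n$ together with $d$ (or with $\min F'$), at which point you are essentially reconstructing the paper's table.
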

    \vspace{.1in}

 Unlike the proof in the $k=1$ case, our proof for general $k$ is quite involved and requires connecting the sequences $(s_{k-1,n})_{n=1}^\infty$ with $(s_{k,n})_{n=1}^\infty$ is a non-obvious but elegant way. In addition, we obtain an interesting corollary to the proof Theorem \ref{thm:lin-rec}. 
 In order to state it, we need some more notation.
 For two sets $E,F \subset \mathbb{N}$ we write $E<F$ if $\max E < \min F$.
 We say that sets $E_1,\dots,E_m \subset \mathbb{N}$ are \emph{successive} if $E_1 < E_2 < \dots < E_m$.
 A set $F \in \calS$ is called a \emph{maximal Schreier set} if $|F|=\min F$. 
 Furthermore, $F \in k\calS$ is a \emph{maximal $k$-Schreier set}, if it can be decomposed as the disjoint union of $k$-many successive maximal Schreier sets.
 Let $\mathcal{R}_{k,n}^0\subset (k\mathcal{S})^n$ be the collection of all maximal $k$-Schreier sets in $(k\mathcal{S})^n$ and $r^0_{k,n}:=|\mathcal{R}^0_{k,n}|$.

 \begin{corollary}
For each $k\in \mathbb{N}$, the sequence $(r^0_{k,n})_{n=1}^\infty$ satisfies a linear recurrence with characteristic polynomial $p_k(x)$. \label{only cor}
 \end{corollary}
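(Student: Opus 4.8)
The plan is to compute the ordinary generating function $R_k(x):=\sum_{n\ge 1} r^0_{k,n}x^n$ in closed form and read the recurrence off its denominator. Write $\widehat{p}_k(x):=x^{2^k}p_k(1/x)=\sum_{j=0}^{2^k}\widehat c_j x^j$ for the reciprocal polynomial of $p_k$, so that $\widehat c_j$ is the coefficient $c_{2^k-j}$ of $p_k$. Since $\sum_{j=0}^{2^k}\widehat c_j\,r^0_{k,m-j}=[x^m]\bigl(\widehat{p}_k(x)R_k(x)\bigr)$ for $m\ge 2^k+1$, the sequence $(r^0_{k,n})$ satisfies the linear recurrence with characteristic polynomial $p_k$ for every $n\ge 1$ if and only if $\widehat{p}_k(x)R_k(x)$ is a polynomial of degree at most $2^k$ (the low-degree coefficients merely record the initial values). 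Thus the Corollary reduces to a single generating-function identity, namely that $\widehat p_k$ is, up to units, the denominator of $R_k$.

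To obtain $R_k$ I first record a uniqueness lemma: every maximal $k$-Schreier set $F$ has a \emph{unique} decomposition $F_1<\dots<F_k$ into successive maximal Schreier sets. Indeed, a maximal Schreier set with minimum $a$ has exactly $a$ elements, so $F_1$ is forced to be the $\min F$ smallest elements of $F$, and one recurses on $F\setminus F_1$. Hence $R_k$ is the generating function of chains of $k$ successive maximal Schreier blocks. A single maximal Schreier block with minimum $a$, recorded by its maximum, has generating function $\psi_a(x)=x^{2a-1}/(1-x)^{a-1}$, and summing those blocks whose minimum exceeds a fixed position $p$ gives $\sum_{a\ge p+1}\psi_a(x)=\tfrac{x}{1-w(x)}\,w(x)^{p}$, where $w(x):=\tfrac{x^2}{1-x}$. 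Consequently appending one block is the linear operator $Tf(x)=\tfrac{x}{1-w(x)}\,f\bigl(w(x)\bigr)$ on power series, and by the uniqueness lemma
\[ R_k(x)=T^k(\mathbf 1)(x)=\prod_{j=0}^{k-1}\frac{w^{(j)}(x)}{1-w^{(j+1)}(x)}, \]
where $w^{(0)}(x)=x$ and $w^{(j+1)}=w\circ w^{(j)}$. For $k=1$ this already gives $R_1(x)=\tfrac{x}{1-w(x)}=\tfrac{x(1-x)}{1-x-x^2}=x(1-x)/\widehat p_1(x)$.

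The main step, and the main obstacle, is to evaluate this product and show the denominator collapses to exactly $\widehat p_k$. The mechanism is the identity obtained by taking reciprocals in the defining relation $p_{k+1}(x)=p_k\bigl(x(x-1)\bigr)$: a short computation yields $\widehat p_{k+1}(x)=(1-x)^{2^k}\,\widehat p_k\bigl(w(x)\bigr)$, together with $1-w(x)=\widehat p_1(x)/(1-x)$ and $\widehat p_0(x)=1-x$. Using these, an induction on $k$ (tracking the nested compositions $w^{(j)}$ and verifying that every intermediate factor cancels) shows that
\[ \widehat p_k(x)\,R_k(x)=x^{2^k-1}(1-x). \]
The delicate part is the bookkeeping of the iterated substitutions $w^{(j)}$ and the cancellation of the spurious powers of $(1-x)$ and of the lower $\widehat p_i$; this is exactly where the degree-doubling substitution $x\mapsto x(x-1)$ enters, and it mirrors the argument behind Theorem~\ref{thm:lin-rec}, so the Corollary is literally the specialization of that argument to chains whose every block is maximal.

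Finally, the right-hand side $x^{2^k-1}(1-x)$ is a polynomial of degree $2^k$ supported only in degrees $2^k-1$ and $2^k$. Hence $[x^m]\bigl(\widehat p_k R_k\bigr)=0$ for all $m\ge 2^k+1$, which by the reduction in the first paragraph is precisely the recurrence $\sum_{i=0}^{2^k}c_i\,r^0_{k,n+i}=0$ for every $n\ge 1$. Consistently, the lowest degree $2^k-1$ reflects that the shortest maximal $k$-Schreier set, $\{1\}<\{2,3\}<\{4,5,6,7\}<\cdots$, has maximum $2^k-1$, so $r^0_{k,n}=0$ for $n<2^k-1$ and no separate base case is required. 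This proves the Corollary.
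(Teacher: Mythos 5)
Your proof is correct, but it takes a genuinely different route from the paper. The paper proves this corollary as the statement (P$_k$) inside the proof of Theorem~\ref{thm:lin-rec}: it organizes the numbers $r^d_{k,n}$ into Pascal-like tables, shows that the main diagonal of the $(k+1)$-table equals the partial sums of the first column of the $k$-table (Lemma~\ref{move tables}), and transfers the recurrence from the diagonals to the first column via Lemma~\ref{got it right}, which is where the substitution $p\mapsto p(x(x-1))$ enters; the base cases are checked separately in Lemma~\ref{initialsums}. You instead compute the ordinary generating function in closed form. Your pieces all check out: the decomposition of a maximal $k$-Schreier set into $k$ successive maximal blocks is indeed unique (the first block is forced to be the $\min F$ smallest elements), the block generating function $\psi_a(x)=x^{2a-1}/(1-x)^{a-1}$ and the transfer operator $Tf=\tfrac{x}{1-w}f(w)$ with $w=x^2/(1-x)$ are right, and the induction you only sketch is actually a two-line computation once one writes $R_{k+1}(x)=\tfrac{x}{1-w(x)}R_k(w(x))$ and uses $\widehat{p}_{k+1}(x)=(1-x)^{2^k}\widehat{p}_k(w(x))$ and $1-w=\widehat p_1/(1-x)$; it yields $\widehat{p}_k(x)R_k(x)=x^{2^k-1}(1-x)$ exactly as you claim, so the "delicate bookkeeping" you worry about is in fact clean. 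What your approach buys is a stronger and more self-contained statement: the closed form $R_k(x)=x^{2^k-1}(1-x)/\widehat{p}_k(x)$ encodes both the recurrence and all initial conditions at once, and it explains conceptually why the characteristic polynomials satisfy $p_{k+1}(x)=p_k(x(x-1))$ --- the substitution $w(x)=x^2/(1-x)$ is precisely the effect of appending one maximal Schreier block. What it does not give is Theorem~\ref{thm:lin-rec} itself: the paper's table machinery is built to handle the sequences $(t_{k,n})$ and $(s_{k,n})$ alongside $(r^0_{k,n})$, whereas your argument treats only the maximal sets, so the claim that it "mirrors the argument behind Theorem~\ref{thm:lin-rec}" overstates the overlap.
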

    
\section{Notation and overview of the proof of Theorem \ref{thm:lin-rec}} \label{sec:int}    


Let $F\subset \mathbb{N}$. We will inductively define a subset $E_k(F)$ of $F$ for each $k \in \mathbb{N}$. Let $E_1(F)$ be the initial segment of $F$ that is a maximal Schreier set, and if no such set exists, let  $E_1(F)=F$. Assuming $E_k(F)$ has been defined for some positive integer $k$, let 
$$E_{k+1}(F) = E_1(F \setminus \bigcup_{i=1}^k E_i(F)).$$

\noindent Observe that $E_1(F), E_2(F),\ldots$ is a partition of $F$ into consecutive Schreier sets, where all non-empty sets are maximal except possibly for the last non-empty one. 

Let $k\in \mathbb{N}$ and $F \in k\calS$. Let $d_k(F)$ be the greatest non-negative integer $d$ so that there is a set $G\subset\mathbb{N}$ with $F<G$, $|G|=d$, and $F\cup G\in k\calS$. If no such $d$ exists, we let $d_k(F)=\infty$.
Note that 
\begin{enumerate}
    \item $d_k(F)<\infty$ if and only if $E_k(F)\not=\emptyset$.
    \item $F$ is a maximal $k$-Schreier set if and only if $d_k(F)=0$.
\end{enumerate} 
For each $n,k\in \mathbb{N}$ and $d$, we define
       \begin{equation}
         \calR_{k,n}^d := \{ F \in (kS)^n \ : \ d_k(F) = d \} \ \ \ \mbox{ and } \ \ \ r_{k,n}^d := |\calR_{k,n}^d|.
     \end{equation}
    Also, let $r_{k,n}^d := 0$ for all $k,n \in \Z, d \in \Z \cup\{\infty\}$ such that the value was not defined above. 
    As a visual aid see the initial values of $r_{k,n}^d$ for $k \in \{1,2\}$ in \cref{tab:k=1-2}.
   
       \newcolumntype{M}[1]{>{\centering\arraybackslash}m{#1}}
\newcolumntype{N}{@{}m{0pt}@{}}

\begin{table}[h!]
    \centering
    \hspace{2.5cm}
    \begin{tabular}{ cc }
        $k=1$ & $k=2$ \\
        \begin{tabular}{  M{0.5cm} V{2.7} M{0.3cm}| M{0.3cm} | M{0.3cm} | M{0.3cm} | M{0.3cm} | M{0.3cm} | M{0.3cm} | M{0.3cm} | M{0.3cm} } 
             $n\backslash d$ & $0$ & $1$ & $2$ & $3$ & $4$ & $5$ & $6$ & 7 & $\cdots$ \\
             \hlineB{2.7}
             1  & 1  &   &  &  & &  &  & &  \\
             \hline
              2  & 0  & 1 & & &  & &  &  & \\
             \hline
              3  & 1  & 0 & 1 &  &  &  &  & &  \\
             \hline
              4  & 1  & 1 & 0& 1 &  & & & & \\
             \hline
              5 & 2  & 1 & 1 & 0 & 1 & & & &  \\
             \hline
              6  & 3  & 2 & 1 & 1 & 0 & 1 &  & &  \\
             \hline
              7  & 5  & 3 & 2 & 1 & 1 & 0 & 1 & &  \\
             \hline
              8  & 8 & 5 & 3 & 2 & 1 & 1 & 0 & 1&   \\
             \hline
             $\vdots$  & $\vdots$  &  &  &  & &  &&  & $\ddots$ 
            \end{tabular} &  
        \begin{tabular}{  M{0.5cm} V{2.7} M{0.3cm}| M{0.3cm} | M{0.3cm} | M{0.3cm} | M{0.3cm} | M{0.3cm} | M{0.3cm} | M{0.3cm} | M{0.3cm} } 
             $n\backslash d$ & $0$ & $1$ & $2$ & $3$ & $4$ & $5$ & $6$ & 7 & $\cdots$ \\
             \hlineB{2.7}
             1 & 0  & &  &  & &  &  &  & \\
             \hline
              2  & 0  & 1 &  & &  & &  &   &\\
             \hline
              3  & 1  & 0 & 1 & &  &  &  &  &\\
             \hline
              4  & 1  & 1 & 0 & 2 & & & &  & \\
             \hline
              5  & 2  & 1 & 2 & 0 & 3 & & &   & \\
             \hline
              6  & 3  & 3 & 2 & 3 & 0 & 5 &  &   & \\
             \hline
              7  & 6  & 5 & 5 & 3 & 5 & 0 & 8 &   & \\
             \hline
              8  & 11 & 10 & 8 & 8 & 5 & 8 & 0 & 13  &  \\
             \hline
             $\vdots$  & $\vdots$  &  &  &  & &  &&  & $\ddots$
            \end{tabular} \\
        \end{tabular}
    \newline
    \caption{The left are the initial values of $(r_{1,n}^d)$, and the right are the initial values of $(r_{2,n}^d)$.}
    \label{tab:k=1-2}
\end{table}

We now record a few observations regarding the above tables. 
\begin{itemize}
    \item[(O1)] Each entry below the first two main diagonals is computed by adding the entry directly north to the northeast entry. We call this the Pascal-like property. Consequently, the two main diagonals determine the rest of the table. Note also that the second main diagonal is constantly zero; we will show this holds in general (Lemma \ref{lem:pascal-rec}).
    \item[(O2)] Due to the Pascal-like property of the tables, if the first two main diagonals satisfy the same recursion, each diagonal satisfies that recursion. Moreover, we can express each term of the diagonal in terms of the first column and use this to compute the recursion for the first column (Lemma \ref{lem:pascal-fm} and \cref{fig:weighted}).  
    \item[(O3)] In the $k=2$ table, the sequence starting at the second entry of the main diagonal is equal to the partial sums of the first column of $k=1$ (Lemma \ref{move tables}). Consequently, these sequences satisfy the exact same recursions. In this case, both sequences satisfy the Fibonacci linear recurrence.
\end{itemize}
   
Taken together, these observations outline an inductive process to compute all of the entries in the tables up to any given $k$. As we can see from the table $k=1$, the sequence $(r_{1,n}^0)_{n=1}^{\infty}$ satisfies the Fibonacci recursion. As we mentioned in the introduction, the sequence $(s_{1,n})_{n=1}^{\infty}$ also satisfies the Fibonacci recursion. This led to the conjecture that this holds for each $k$, namely, that $(r_{k,n}^0)_{n=1}^{\infty}$ and $(s_{k,n})_{n=1}^{\infty}$ satisfy the same recursion relation. We verify this in the affirmative. The main step is to note and prove that for all natural numbers $k,n$, we have
        \begin{equation}
            s_{k,n} = 2s_{k,n-1} - r_{k,n-1}^0. \label{bigdeal}
        \end{equation} 

\section{Initial values of the sequences}

This section contains several computations related to the initial values of the sequences we are considering.

   \begin{lemma}   \label{lem:r-init}
        For all $k \in \N$ with $k\geqslant 2$ and for each $n \in \{1,\cdots, 2^k-2\}$ we have $r_{k,n}^0 = 0$. Moreover, \[r_{k,2^k-1}^0 = 1 , \ \  \ \ r_{k,2^k}^0 = 2^{k-1}-1, \ \ \ r_{k,2^k+1}^0= \binom{2^{k-1}}{2} + 2^{k-2}. \] \label{the r's}
    \end{lemma}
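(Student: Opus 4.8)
The plan is to exploit the unique decomposition of a maximal $k$-Schreier set into successive maximal Schreier sets and to measure precisely how much ``room'' such a configuration can use above the minimal one. Recall from the partition $E_1(F),E_2(F),\dots$ that a maximal $k$-Schreier set $F$ decomposes uniquely as $F=F_1\cup\cdots\cup F_k$ with $F_1<\cdots<F_k$, each $F_i$ a maximal Schreier set (the $E_i(F)$); write $m_i=\min F_i$, so that $|F_i|=m_i$ and $\max F_i\geq 2m_i-1$.

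First I would establish the lower bound $\max F_j\geq 2^j-1$ for every $j$ by induction: successiveness gives $m_{j+1}>\max F_j\geq 2m_j-1$, hence $m_{j+1}\geq 2m_j$ and $\max F_{j+1}\geq 2m_{j+1}-1\geq 2^{j+1}-1$. Taking $j=k$ yields $\max F_k\geq 2^k-1$, so $r_{k,n}^0=0$ for $n\leq 2^k-2$. Tracing the equality case shows every inequality above must be tight, which forces $m_i=2^{i-1}$ and $F_i=\{2^{i-1},\dots,2^i-1\}$; hence $F=\{1,\dots,2^k-1\}$ is the unique maximal $k$-Schreier set with $\max=2^k-1$, giving $r_{k,2^k-1}^0=1$.

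For the two remaining values I would introduce slack variables recording the deviation from this minimal configuration: $\gamma=m_1-1\geq 0$, the spread $\alpha_i=\max F_i-(2m_i-1)\geq 0$ of each block, and the gap $\beta_i=m_{i+1}-\max F_i-1\geq 0$ before the next block. Unrolling the recursion $m_{i+1}=2m_i+\alpha_i+\beta_i$ produces the key identity for the excess
\[ e:=\max F_k-(2^k-1)=2^k\gamma+\sum_{i=1}^{k-1}2^{k-i}(\alpha_i+\beta_i)+\alpha_k. \]
Since a maximal Schreier set with fixed $\min=m$ and spread $\alpha$ can be completed in exactly $\binom{m-2+\alpha}{\alpha}$ ways (choosing the intermediate elements, with the usual convention that also covers the singleton $m=1$, where it returns $1$ for $\alpha=0$ and $0$ otherwise), each configuration of excess $e$ contributes $\prod_{i=1}^k\binom{m_i-2+\alpha_i}{\alpha_i}$, and $r_{k,2^k-1+e}^0$ is the sum of these products over all nonnegative integer solutions of the displayed equation.

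The final step is a short case analysis driven by the fact that, apart from the coefficient $1$ of $\alpha_k$, every coefficient in the excess equation is at least $2$. For $e=1$ the only solution is $\alpha_k=1$ with all other variables zero, so $m_i=2^{i-1}$ and the product collapses to $\binom{2^{k-1}-1}{1}=2^{k-1}-1$. For $e=2$ there are exactly three solutions --- namely $\alpha_k=2$, and $\alpha_k=0$ with $(\alpha_{k-1},\beta_{k-1})\in\{(1,0),(0,1)\}$ --- contributing $\binom{2^{k-1}}{2}$, then $2^{k-2}-1$, then $1$, whose sum is $\binom{2^{k-1}}{2}+2^{k-2}$. The main obstacle, and the place needing the most care, is the bookkeeping in this last step: one must check the excess equation has no further solutions (this is exactly where $k\geq 2$ enters, so that the index $k-1$ is available), recompute the $m_i$ after a perturbation localized in block $k-1$ (the earlier $m_i$ are unchanged, while $m_k=2^{k-1}+1$), and verify that the binomial convention returns $0$ in the degenerate singleton case $m_1=1$, so that the clean formula $2^{k-2}-1$ remains correct even when $k=2$.
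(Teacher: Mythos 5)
Your proof is correct and follows the same underlying strategy as the paper's: identify $\{1,\dots,2^k-1\}$ as the unique minimal maximal $k$-Schreier set, then enumerate the configurations of small excess above it. The only real difference is presentational --- your slack-variable identity for the excess and the resulting sum of products $\prod_i\binom{m_i-2+\alpha_i}{\alpha_i}$ systematically carries out the case analysis that the paper compresses into ``one easily checks,'' and your explicit treatment of the degenerate block $m_1=1$ (so that $2^{k-2}-1=0$ when $k=2$) is a welcome extra check.
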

    \begin{proof}
        Since $\{1,\cdots, 2^k-1\}$ is a maximal $k$-Schreier set, no proper subset can be a maximal $k$-Schreier set. This proves that
        for each $n \in \{1,\cdots, 2^k-2\}$, we have $r_{k,n}^0 = 0$ and $r_{k,2^k-1}^0 = 1$. The equality $r_{k,2^k}^0 = 2^{k-1}-1$ follows from the fact that  $\{1,\cdots, 2^k\}\setminus\{n\}$ is a maximal $k$-Schreier set for each $n \in \{2^{k-1}+1,\cdots, 2^k-1\}$, and every such set is of this form.  To compute $r_{k,2^k+1}^0$, one easily checks that any set $F \in \mathcal{R}_{k,2^k+1}^0$ has initial segment  $\{1, \ldots, 2^{k-2}\}$ and is made by either deleting two numbers from the set $\{2^{k-1}+1, \cdots, 2^{k}\}$ or deleting one number from $\{2^{k-2}+1,\ldots,2^{k-1}\}$. 
            \end{proof}

\noindent For all $n, k \in \N$, set the following notation
\[ t_{k,n} := \sum_{i=1}^{n} r_{k,i}^0.\]    

\noindent The next lemma follows trivially from Lemma \ref{the r's}.

      \begin{lemma}   \label{the t's}
        For all $k \in \N$ with $k \geqslant 2$ and for each $n \in \{1,\cdots, 2^k-2\}$ we have $t_{k,n} = 0$. Moreover, \[t_{k,2^k-1} = 1 , \ \  \ \ t_{k,2^k} = 2^{k-1}, \ \ \ t_{k,2^k+1}= 2^{k-1} + \binom{2^{k-1}}{2} + 2^{k-2}. \]
    \end{lemma}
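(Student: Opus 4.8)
The plan is to compute the partial sums $t_{k,n} = \sum_{i=1}^{n} r_{k,i}^0$ directly from the values established in Lemma~\ref{the r's}, since $t_{k,n}$ is by definition just the running total of the $r_{k,i}^0$. First I would dispose of the vanishing range: by Lemma~\ref{the r's}, $r_{k,i}^0 = 0$ for every $i \in \{1,\dots,2^k-2\}$, so for each $n$ in this range the sum $t_{k,n}$ is a sum of zeros and hence equals $0$. This settles the first assertion.

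For the three explicit values I would accumulate one term at a time, using $t_{k,n} = t_{k,n-1} + r_{k,n}^0$. Starting from $t_{k,2^k-2} = 0$ and adding $r_{k,2^k-1}^0 = 1$ gives $t_{k,2^k-1} = 1$. Adding next $r_{k,2^k}^0 = 2^{k-1}-1$ gives $t_{k,2^k} = 1 + (2^{k-1}-1) = 2^{k-1}$. Finally, adding $r_{k,2^k+1}^0 = \binom{2^{k-1}}{2} + 2^{k-2}$ yields
\[
t_{k,2^k+1} = 2^{k-1} + \binom{2^{k-1}}{2} + 2^{k-2},
\]
which is exactly the claimed value.

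Since every step is an immediate substitution of the quantities computed in Lemma~\ref{the r's}, there is no genuine obstacle here; this is precisely why the statement is advertised as following trivially. The only point requiring any care is the index bookkeeping, namely confirming that the zero-terms are fully absorbed before the first nonzero contribution appears at $n = 2^k - 1$, so that the telescoping of partial sums starts from the correct base value $t_{k,2^k-2} = 0$.
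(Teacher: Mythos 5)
Your proposal is correct and matches the paper's intent exactly: the paper gives no explicit proof, stating only that the lemma ``follows trivially from Lemma \ref{the r's},'' and your step-by-step accumulation of the partial sums is precisely that computation spelled out. No gaps.
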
  

      \begin{lemma}   \label{the s's}
        For all $k \in \N$, and for each $n \in \{1,\cdots, 2^k-1\}$, we have $s_{k,n} = 2^{n-1}$. Moreover, \[s_{k,2^k}^0 = 2^{2^k-1}-1 , \ \  \ \ s_{k,2^k+1} = 2^{2^k}-(2^{k-1}+1). \]
    \end{lemma}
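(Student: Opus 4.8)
The plan is to base everything on a single extension principle and then read off the three claims by elementary counting. Throughout, call a finite set \emph{good} if it lies in $k\calS$ and \emph{bad} otherwise. The one structural fact I will use repeatedly is that every subset of a Schreier set is Schreier: if $\emptyset\neq G\subseteq F$ then $|G|\leqslant|F|\leqslant\min F\leqslant\min G$. Hence every subset of a good set is good, so the family of bad sets is upward closed. I will also use, from the setup of \cref{sec:int}, that $d_k(F)=0$ exactly when $F$ is a maximal $k$-Schreier set.

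For the first assertion I would begin by checking that $\{1,\dots,2^k-1\}$ is a maximal $k$-Schreier set, exhibiting the successive maximal Schreier blocks $\{2^{i-1},\dots,2^i-1\}$ for $i=1,\dots,k$. Since subsets of good sets are good, \emph{every} subset of $\{1,\dots,2^k-1\}$ is good; therefore, for $n\leqslant 2^k-1$, the family $(k\calS)^n$ consists of all subsets of $\{1,\dots,n\}$ with maximum $n$, and there are exactly $2^{n-1}$ of these. This gives $s_{k,n}=2^{n-1}$ for $n\leqslant 2^k-1$.

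The engine for the two remaining values is the observation that, for a finite set $F$ and integers $g,g'>\max F$, one has $F\cup\{g\}\in k\calS$ if and only if $F\cup\{g'\}\in k\calS$. Indeed, in any decomposition of $F\cup\{g\}$ into $k$ Schreier sets, $g$ is the overall maximum, hence the maximum of whatever block contains it; replacing $g$ by $g'$ leaves that block's cardinality and minimum unchanged (the minimum moves only if the block is the singleton $\{g\}$, which stays Schreier), so the decomposition survives, and the converse is symmetric. Taking $g'=\max F+1$ then shows that, for $F\in k\calS$ and any $g>\max F$, the set $F\cup\{g\}$ is good precisely when $d_k(F)\geqslant 1$, that is, precisely when $F$ is \emph{not} a maximal $k$-Schreier set. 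This is the \emph{extension lemma}; if instead $F$ is bad then $F\cup\{g\}$ is bad by upward closure. Verifying this lemma—specifically that the identity of the largest element is irrelevant to membership in $k\calS$—is the one genuinely delicate point; everything after it is bookkeeping.

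With the extension lemma in hand I would count complements. For $n=2^k$, write $F=F^-\cup\{2^k\}$ with $F^-\subseteq\{1,\dots,2^k-1\}$; as $F^-$ is automatically good, $F$ is bad exactly when $F^-$ is a maximal $k$-Schreier set, and by \cref{lem:r-init} (with $r^0_{k,n}=0$ for $n\leqslant 2^k-2$) the only such $F^-$ is $\{1,\dots,2^k-1\}$. Thus exactly one of the $2^{2^k-1}$ sets with maximum $2^k$ is bad, giving $s_{k,2^k}=2^{2^k-1}-1$. For $n=2^k+1$, write $F=F^-\cup\{2^k+1\}$ with $F^-\subseteq\{1,\dots,2^k\}$; the extension lemma shows $F$ is bad iff $F^-$ is bad or $F^-$ is a maximal $k$-Schreier set. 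By the previous case there is a single bad $F^-\subseteq\{1,\dots,2^k\}$, namely $\{1,\dots,2^k\}$, while by \cref{lem:r-init} the maximal $k$-Schreier subsets of $\{1,\dots,2^k\}$ number $r^0_{k,2^k-1}+r^0_{k,2^k}=1+(2^{k-1}-1)=2^{k-1}$; these two families are disjoint (good versus bad), so there are $2^{k-1}+1$ bad sets and $s_{k,2^k+1}=2^{2^k}-(2^{k-1}+1)$. Finally, since \cref{lem:r-init} is stated only for $k\geqslant 2$, I would dispatch $k=1$ directly from the values $r^0_{1,1}=1$ and $r^0_{1,2}=0$, which make the same count yield $s_{1,2}=1$ and $s_{1,3}=2$.
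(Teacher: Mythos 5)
Your proof is correct, and while the first claim is handled exactly as in the paper, you take a genuinely different route to the two ``Moreover'' values. The paper computes $s_{k,2^k+1}$ by directly characterizing the subsets of $\{1,\dots,2^k+1\}$ with maximum $2^k+1$ that lie outside $k\calS$, using the greedy blocks $E_i(F)$ to argue that any such set must contain $\{1,\dots,2^{k-1}\}$ and hence is the full interval with at most one element of $\{2^{k-1}+1,\dots,2^k\}$ deleted (and, for $s_{k,2^k}$, it simply asserts that every proper subset of $\{1,\dots,2^k\}$ is in $k\calS$). You instead isolate an extension principle --- whether $F\cup\{g\}$ belongs to $k\calS$ is independent of the choice of $g>\max F$, so for good $F$ the extension is good precisely when $d_k(F)\geqslant 1$, i.e.\ when $F$ is not a maximal $k$-Schreier set --- and then read both counts off the values already established in Lemma~\ref{lem:r-init}. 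Your extension lemma is in substance the key step of the paper's later Lemma~\ref{lem:s=2s+r} (the identity $s_{k,n}=2s_{k,n-1}-r^0_{k,n-1}$), proved here from scratch; this buys cleaner bookkeeping and genuine reuse of Lemma~\ref{lem:r-init}, at the cost of verifying the (correct, mildly delicate) claim that replacing the top element of a Schreier block preserves the Schreier condition. You are also more careful than the paper on the edge case $k=1$, which you rightly dispatch separately since Lemma~\ref{lem:r-init} assumes $k\geqslant 2$. Two small points to tidy when writing this up: a union of at most $k$ Schreier sets need not have disjoint blocks, so $g$ may lie in several of them and the replacement must be performed in each (the argument goes through verbatim); and the case $F^-=\emptyset$ should be noted explicitly, where $F$ is a singleton and hence good, consistent with your count.
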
  

    \begin{proof}
        Fix $k \in\mathbb{N}$. The set $\{1,\ldots, 2^{k}-1\}$ is a maximal $k$-Schreier set. Therefore, every subset of $\{1,\ldots, 2^{k}-1\}$ is $k$-Schreier set. Consequently, for each $n \in \{1,\ldots, 2^k-1\}$, we have $s_{k,n} = 2^{n-1}$. On the other hand, $\{1,\ldots, 2^{k}\}$ is not a $k$-Schreier set, but every of its proper subset is a $k$-Schreier. Thus, $s_{k,2^k} = 2^{2^k-1}-1$ 
        
        We compute $s_{k,2^k+1}$. We will show there are exactly $2^k+1$ many subsets of $\{1,\ldots,2^k+1\}$ with maximum element $2^k+1$ that are not in $k\calS$. Let $F$ be such a set and assume the case that $F\not=\{1,\ldots,2^k+1\}$. 
        
        First observe that $\{1,\ldots,2^{k-1}\} \subset F$. Indeed, if this were not the case, then $\min E_k(F)>2^{k-1}$, which implies that $\max E_k(F)\geqslant2^{k}+1$. Hence, since $F\not \in k\mathcal{S}$, $\max F >2^k+1$, a contradiction. Therefore, $\min E_k(F)=2^{k-1}$, and since $2^k-1 \leq \max E_k(F)<2^k+1$,  it follows that $F= \{1, \ldots, 2^k+1\}\setminus \{n\}$ for each $n \in \{2^{k-1} + 1,\ldots,2^k\}$. Including the case that $F=\{1, \ldots,2^k+1\}$. We have a total of $2^k+1$ many such sets. This is the desired result.
     \end{proof}

\noindent For each $k \in \mathbb{N}$, let $(d^k_{i})_{i=0}^{2^k}$ be so that 
$$p_k(x)= \sum_{i=0}^{2^k} d^k_i x^i.$$

\begin{lemma}\label{the d's}
Let $k \in \mathbb{N}$ with $k \geqslant 2$. Then 
\[d_{2^k-2}^k = \binom{2^{k-1}}{2}  -2^{k-2}, \ \  \ \ d^k_{2^k-1}=-2^{k-1} \ \  ,  d^k_{2^k}=1 \]
\end{lemma}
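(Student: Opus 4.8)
The plan is to track the three top-degree coefficients of $p_k(x)$ through the defining substitution $x \mapsto x(x-1) = x^2 - x$. Write the three leading terms as
\[ p_k(x) = d_{2^k}^k\, x^{2^k} + d_{2^k-1}^k\, x^{2^k-1} + d_{2^k-2}^k\, x^{2^k-2} + (\text{lower order}), \]
and abbreviate $a_k := d_{2^k-1}^k$ and $b_k := d_{2^k-2}^k$. First I would settle the leading coefficient: since $p_0(x) = x-1$ is monic and the leading term of $p_k(x(x-1))$ comes from the leading term $x^{2^k}$ of $p_k$ as $(x^2-x)^{2^k}$, which has leading term $x^{2^{k+1}}$, an immediate induction gives $d_{2^k}^k = 1$ for every $k$, establishing the third assertion.

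For the remaining two coefficients, I would record the top three terms of the power $y^m$, where $y = x^2 - x$. From the binomial theorem,
\[ y^m = (x^2 - x)^m = x^{2m} - m\,x^{2m-1} + \binom{m}{2} x^{2m-2} + (\text{lower order}). \]
Substituting $y = x(x-1)$ into $p_k(y) = \sum_i d_i^k\, y^i$ and collecting the coefficients of $x^{2^{k+1}}$, $x^{2^{k+1}-1}$, and $x^{2^{k+1}-2}$, the key observation (pure degree bookkeeping) is that only the two top terms $y^{2^k}$ and $a_k\, y^{2^k-1}$ of $p_k$ can reach these powers: the next term $b_k\, y^{2^k-2}$ has degree $2(2^k-2) = 2^{k+1}-4$, already too small. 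Applying the displayed expansion with $m = 2^k$ to $y^{2^k}$ and with $m = 2^k-1$ to $y^{2^k-1}$ then yields the recurrences $a_{k+1} = -2^k$ and $b_{k+1} = \binom{2^k}{2} + a_k$, where in the second I use that the leading coefficient of $y^{2^k-1}$ equals $1$.

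Solving these is routine. The first recurrence gives $d_{2^k-1}^k = a_k = -2^{k-1}$ for all $k \geq 1$, which is the middle assertion. Feeding $a_{k-1} = -2^{k-2}$ (valid once $k-1 \geq 1$, i.e.\ $k \geq 2$) into the second recurrence gives $d_{2^k-2}^k = b_k = \binom{2^{k-1}}{2} - 2^{k-2}$, the first assertion; this is precisely why the statement is restricted to $k \geq 2$, since for $k = 1$ the quantity $b_1$ coincides with the constant coefficient of $p_1$ and the closed form fails. I expect the only place requiring genuine care is the degree bookkeeping in the substitution step, namely verifying that the lower-order terms of $p_k$ cannot contribute to the three highest powers of $p_{k+1}$; but this is straightforward once the expansion of $(x^2-x)^m$ is written out.
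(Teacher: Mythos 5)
Your proposal is correct and follows essentially the same route as the paper: both arguments track the three leading coefficients through the substitution $x \mapsto x(x-1)$ using the binomial expansion $(x^2-x)^m = x^{2m} - mx^{2m-1} + \binom{m}{2}x^{2m-2} + \cdots$ and the observation that lower-order terms of $p_k$ cannot reach the top three degrees. The only cosmetic difference is that you phrase the induction as explicit recurrences for $a_k$ and $b_k$ and then solve them, whereas the paper substitutes the inductively known coefficients of $p_{k-1}$ directly.
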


\begin{proof}
We proceed by induction on $k$. Since $p_2(x)=x^4-2x^3+x -1$, the base case $k=2$ holds. Fix some integer $k$ with $k\geqslant 3$ and assume the identities hold for $k-1$. Let $m=2^{k-1}$. Then
$$p_k(x) = p_{k-1}(x(x-1)) = x^m (x-1)^m - \frac{m}{2}x^{m-1}(x-1)^{m-1} + \cdots .$$
By inspection of the leading coefficients and using the binomial theorem, we have
$$p_k(x) = x^{2m} -mx^{2m-1} + \bigg(\binom{m}{2} - \frac{m}{2}\bigg)x^{2m-2} + \cdots .$$
This completes the inductive step of the proof.
\end{proof}

The next lemma verifies that the recursions are satisfied for the initial $2^k+1$ values of the sequence. This lemma will be repeatedly used as the base case of the induction in the proof of Theorem \ref{thm:lin-rec}.

\begin{lemma}
For each $k \in \mathbb{N}$, we have
$$\sum_{i=1}^{2^k+1} d^k_{i-1} r^0_{k,i} =\sum_{i=0}^{2^k} d^k_{i} t_{k,i+1} = \sum_{i=1}^{2^k} d^k_{i} t_{k,i} = \sum_{i=0}^{2^k} d^k_{i} s_{k,i+1}=0.$$
\label{initialsums}
\end{lemma}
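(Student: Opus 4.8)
The four sums are precisely the recurrence with characteristic polynomial $p_k$ applied to the sequences $(r^0_{k,n})$, $(t_{k,n})$, $(t_{k,n})$, and $(s_{k,n})$ at small starting indices, so the plan is to verify each vanishes by a finite computation, feeding in the explicit values from Lemmas \ref{the r's}, \ref{the t's}, \ref{the s's}, and \ref{the d's}. Denote the four sums by $S_1,S_2,S_3,S_4$ in the order written. First I would record the reduction linking the first three: after re-indexing $S_1=\sum_{i=0}^{2^k}d^k_i r^0_{k,i+1}$, and since $r^0_{k,i+1}=t_{k,i+1}-t_{k,i}$ with $t_{k,0}=0$, one gets
\[ S_2-S_3=\sum_{i=0}^{2^k}d^k_i\big(t_{k,i+1}-t_{k,i}\big)=\sum_{i=0}^{2^k}d^k_i r^0_{k,i+1}=S_1. \]
Hence it suffices to prove $S_1=0$, $S_3=0$, and $S_4=0$, since then $S_2=S_1+S_3=0$.

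For $S_1$ and $S_3$ I would exploit the vanishing of the low-order terms. By Lemmas \ref{the r's} and \ref{the t's} one has $r^0_{k,i}=t_{k,i}=0$ for $i\le 2^k-2$, so $S_1$ reduces to the three terms with $i\in\{2^k-2,2^k-1,2^k\}$ and $S_3$ to the two terms with $i\in\{2^k-1,2^k\}$. Writing $m=2^{k-1}$ and substituting $d^k_{2^k-2}=\binom{m}{2}-\tfrac m2$, $d^k_{2^k-1}=-m$, $d^k_{2^k}=1$ (Lemma \ref{the d's}), $r^0_{k,2^k-1}=1$, $r^0_{k,2^k}=m-1$, $r^0_{k,2^k+1}=\binom m2+\tfrac m2$ (Lemma \ref{the r's}), and $t_{k,2^k-1}=1$, $t_{k,2^k}=m$ (Lemma \ref{the t's}), the sums collapse to $S_1=2\binom m2-m(m-1)=0$ and $S_3=-m+m=0$, the first via the identity $2\binom m2=m(m-1)$. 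Since the relevant formulas require $k\ge 2$, I would dispose of $k=1$ by a direct check of all four sums against Table \ref{tab:k=1-2}.

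The sum $S_4$ is the genuinely different one, and I expect it to be the main obstacle, because $(s_{k,n})$ does not vanish at low indices; instead Lemma \ref{the s's} gives $s_{k,i+1}=2^i$ for $0\le i\le 2^k-2$. The idea is to recognize $\sum_{i=0}^{2^k}d^k_i 2^i=p_k(2)$: replacing the true top values $s_{k,2^k}=2^{2^k-1}-1$ and $s_{k,2^k+1}=2^{2^k}-(m+1)$ by $2^{2^k-1}$ and $2^{2^k}$ turns $S_4$ into $p_k(2)$, and correcting for the two replaced terms using $d^k_{2^k-1}=-m$ and $d^k_{2^k}=1$ yields
\[ S_4=p_k(2)-d^k_{2^k-1}-(m+1)\,d^k_{2^k}=p_k(2)-1. \]
It then remains to show $p_k(2)=1$, which follows at once from the defining recursion \eqref{defn:p_k}: since $2(2-1)=2$, we have $p_k(2)=p_{k-1}(2)$, and iterating down to $p_0(2)=2-1=1$ gives $p_k(2)=1$ for every $k$. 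Thus $S_4=0$, and the lemma follows.
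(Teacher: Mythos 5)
Your proposal is correct and follows essentially the same route as the paper: the first three sums are verified by direct substitution of the initial values from Lemmas \ref{the r's}, \ref{the t's}, and \ref{the d's} (you merely add the tidy reduction $S_2=S_1+S_3$), and the fourth sum is handled by exactly the paper's trick of recognizing $\sum_i d^k_i 2^i=p_k(2)$ and showing $S_4=p_k(2)-1=0$ via $p_k(2)=p_{k-1}(2)=\dots=p_0(2)=1$. Your explicit treatment of $k=1$ as a separate base case is a worthwhile refinement, since the formula for $d^k_{2^k-2}$ in Lemma \ref{the d's} is only stated (and only valid) for $k\geqslant 2$, a point the paper's one-line ``direct calculation'' glosses over.
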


\begin{proof} Direct calculation using the above lemmas show that the first three sums are 0. For the final identity, notice first that $p_k(2)=1$ for each $k \in \mathbb{N}$. Thus using the identities in Lemmas \ref{the s's} and \ref{the d's}, we have
\begin{equation}
    \begin{split}
        0=p_k(2)-1 & = d^k_0 + d^k_1 2 + \cdots + d^k_{2^k-1}(2^{2^k-1}-1) + d^k_{2^k}2^{2^k}-1+d^k_{2^k-1}\\
        & = d^k_0 + d^k_1 2 + \cdots + d^k_{2^k-1}(2^{2^k-1}-1) +d^k_{2^k}(2^{2^k}-2^{k-1}-1) \\
        & = \sum_{i=0}^{2^k} d^k_{i} s_{k,i+1}\qedhere
    \end{split}
\end{equation}
\end{proof}

   \section{Pascal-like sets and recursions}
   
        A set $\{r^j_{i}:i \in \N, j\in \{0,\cdots, i-1\}\}$ is \emph{Pascal-like} if for every $i \in \mathbb{N}$ with $i\geqslant 3$ and every $j \in \{0,\cdots, i-3\}$ we have
        $r_{i}^{j} = r_{i-1}^{j} + r_{i-1}^{j+1}.$
By definition, the Pascal-like set is determined by the values of the two main diagonals $(r^{i-1}_{i})_{i=1}^\infty$ and $(r^{i-1}_{i+1})_{i=1}^\infty$. 
See \cref{fig:pascal-set-ex} for an illustration. 
In the next lemma, we express each element of the main diagonal in terms of the first column.

\begin{figure}[!t]
    \centering
    \includegraphics{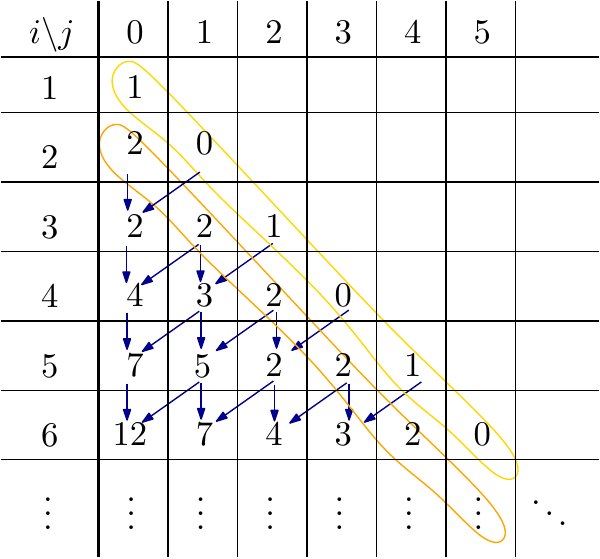}
    \caption{An initial fragment of a Pascal-like set. The cells of a table are filled with the numbers $r_i^j$, e.g.\ $r_6^1 = 7$. The set is determined by the yellow and orange diagonals.}
    \label{fig:pascal-set-ex}
\end{figure}

\begin{lemma} \label{lem:pascal-fm}
    Let $\{r_{i}^{j}:i \in \N, j\in \{0,\cdots, i-1\}\}$ be Pascal-like set. Then for each $n \in \mathbb{N}$, we have
        \begin{equation}
            r_{n}^{n-1} = \sum_{j=0}^{n-1} (-1)^{n-1-j} \binom{n-1}{j} r^0_{n+j}. \label{binomal}
        \end{equation} 
\end{lemma}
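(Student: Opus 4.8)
The plan is to prove a stronger statement from which \eqref{binomal} follows by specialization: for all integers $i,j$ with $0 \le j \le i-1$,
\begin{equation}
r_i^j = \sum_{\ell=0}^{j} (-1)^{j-\ell}\binom{j}{\ell}\, r_{i+\ell}^0. \tag{$\star$}
\end{equation}
Taking $j=n-1$ and $i=n$ recovers \eqref{binomal} exactly, since then $j=i-1$ places $r_i^j$ on the main diagonal. Conceptually, the right-hand side of $(\star)$ is the $j$-th iterated forward difference of the first-column sequence $i\mapsto r_i^0$, evaluated at $i$; this is the content of the identity and it suggests that each column of the table is the forward difference of the preceding column.

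First I would recast the Pascal-like relation in a form adapted to induction on the column index. Starting from the defining relation $r_i^j = r_{i-1}^j + r_{i-1}^{j+1}$, shifting $i\mapsto i+1$ and solving for the column-$(j+1)$ term gives
\[ r_i^{j+1} = r_{i+1}^j - r_i^j, \]
valid exactly when the underlying instance of the Pascal-like relation is, namely for $i\ge 2$ and $j\le i-2$. This displays column $j+1$ as the forward difference of column $j$ and is the engine of the whole argument.

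Then I would prove $(\star)$ by induction on $j$, reading the statement as ``for every $i\ge j+1$''. The base case $j=0$ is the tautology $r_i^0=r_i^0$. For the inductive step, fix $i\ge j+2$ (so that $j\le i-2$ and the displayed relation applies), substitute the induction hypothesis for both $r_{i+1}^j$ and $r_i^j$, and then shift the summation index of the first sum so that both sums run over the terms $r_{i+m}^0$. Combining them, the coefficient of $r_{i+m}^0$ becomes $(-1)^{j+1-m}\bigl(\binom{j}{m-1}+\binom{j}{m}\bigr)$, which equals $(-1)^{j+1-m}\binom{j+1}{m}$ by Pascal's rule (with the usual convention $\binom{j}{-1}=\binom{j}{j+1}=0$). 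This is precisely $(\star)$ for $j+1$, completing the induction; specializing to $j=n-1$, $i=n$ then yields the lemma.

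The step needing the most care is the bookkeeping of admissible ranges: the Pascal-like relation holds only for $i\ge 3$ and $j\le i-3$, so at each stage I must verify that the instance invoked and the two applications of the induction hypothesis (to $r_{i+1}^j$ and $r_i^j$) stay inside the valid triangle $0\le j\le i-1$. Carrying the hypothesis uniformly ``for every $i\ge j+1$'' rather than for a single row is exactly what makes the induction well-founded and allows it to reach the diagonal entry $r_n^{n-1}$, which is the borderline case $i=j+1$. By contrast, the algebraic manipulation — a single reindexing followed by one use of Pascal's rule — is entirely routine.
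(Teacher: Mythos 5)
Your proof is correct and is essentially the paper's argument in a different dress: the paper inducts along the diagonal while quantifying over all Pascal-like sets and shifting the table via $b_i^j = r_{i+1}^{j+1}$, whereas you strengthen the statement to every entry $r_i^j$ of a fixed table and induct on the column index $j$; in both cases the engine is the forward-difference relation $r_i^{j+1}=r_{i+1}^j-r_i^j$ followed by one application of Pascal's rule. Your range bookkeeping ($i\ge j+2$ in the inductive step, so that $j\le i-2$ and both uses of the induction hypothesis stay in the triangle) is accurate, so nothing is missing.
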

\begin{proof}
We proceed by induction. We shall prove that for each $n\in  \mathbb{N}$, any Pascal-like set satisfies (\ref{binomal}). For $n = 1$, the assertion is clearly satisfied. Fix $n \geqslant 2$ and assume that (\ref{binomal}) holds replacing $n$ with $n-1$ for any Pascal-like set. Fix a Pascal-like set $\{r_{i}^{j}:i \in \N, j\in \{1,\cdots, i-1\}\}$ and define $b_{i}^{j} = r_{i+1}^{j+1}$. Then $\{b_{i}^{j}:i \in \N, j\in \{0,\cdots, i-1\}\}$ is also a Pascal-like set. By the inductive assumption, we have
    \begin{align*}
        r_{n}^{n-1} = b_{n-1}^{n-2} &= \sum_{j=0}^{n-2} (-1)^{n-2-j} \binom{n-2}{j} b_{n-1+j}^{0} = \sum_{j=0}^{n-2} (-1)^{n-2-j} \binom{n-2}{j} r^1_{n+j} \\
        &= \sum_{j=0}^{n-2} (-1)^{n-2-j} \binom{n-2}{j} (r^0_{n+j+1} - r^0_{n+j}) \\
        &= \sum_{j=1}^{n-1} (-1)^{n-1-j} \binom{n-2}{j-1} r^0_{n+j} + \sum_{j=0}^{n-2} (-1)^{n-1-j} \binom{n-2}{j} r^0_{n+j} \\
        &= \sum_{j=0}^{n-1} (-1)^{n-1-j} \binom{n-1}{j} r^0_{n+j}.
    \end{align*}
    Notice that, in the fourth equality, we used the definition of a Pascal-like set.
\end{proof}

The properties of Pascal-like sets allow transferring recursive relations between diagonals and columns.
In \cref{any diag} we show how recursive relations are transferred between diagonals and in \cref{got it right} we show how recursive relations are transferred from the main diagonals to the first column (see \cref{fig:weighted} for intuition).
Next, in \cref{mainlemma}, we relate the recursive relation on the main diagonals with the partial sums of the first column.

\begin{lemma}
Let $\{r_{i}^{j}:i \in \N, j\in \{0,\cdots, i-1\}\}$ be a Pascal-like set. If $(r_{i}^{i-1})_{i=1}^\infty$ and $(r_{i+1}^{i-1})_{i=1}^\infty$ both satisfy a linear recurrence with characteristic polynomial $p(x)$, then for each integer $\ell$ with $\ell\geqslant 2$, the sequence $(r_{\ell+i}^{i-1})_{i=1}^\infty$ satisfies a linear recurrence with characteristic polynomial $p(x)$. \label{any diag}
\end{lemma}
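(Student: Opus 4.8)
The plan is to read the Pascal-like relation along diagonals, which turns it into a three-term recurrence between consecutive diagonals, and then to invoke the standard fact that the set of sequences obeying a fixed linear recurrence is a subspace closed under index shifts. For each integer $\ell \geqslant 0$, write $D_\ell$ for the diagonal sequence $D_\ell(i) := r_{\ell+i}^{i-1}$ for $i \in \N$. Then $D_0 = (r_i^{i-1})_{i=1}^\infty$ and $D_1 = (r_{i+1}^{i-1})_{i=1}^\infty$ are exactly the two main diagonals appearing in the hypothesis, and the sequence we must control is $D_\ell$ for $\ell \geqslant 2$.

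First I would establish the three-term relation between consecutive diagonals. Fix $\ell \geqslant 2$ and $i \in \N$. Since $\ell + i \geqslant 3$ and $i-1 \leqslant \ell + i - 3$, the Pascal-like property applies to the entry $r_{\ell+i}^{i-1}$ and yields $r_{\ell+i}^{i-1} = r_{\ell+i-1}^{i-1} + r_{\ell+i-1}^{i}$. Re-indexing the two right-hand terms as diagonal entries (matching row minus column), the first equals $D_{\ell-1}(i)$ and the second equals $D_{\ell-2}(i+1)$. Hence for every $\ell \geqslant 2$ and every $i \in \N$,
\[ D_\ell(i) = D_{\ell-1}(i) + D_{\ell-2}(i+1) \qquad (\ell \geqslant 2,\ i \in \N). \]
Note that the applicability of the Pascal-like property is precisely what forces $\ell \geqslant 2$, which explains the threshold in the statement.

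Next I would record two standard closure properties of the solution set $V_p$ consisting of all sequences satisfying the linear recurrence with characteristic polynomial $p(x)$. If $(a_n) \in V_p$, then the shifted sequence $(a_{n+1})$ again lies in $V_p$, since applying the defining identity $\sum_j c_j a_{n+j} = 0$ at $n+1$ gives the recurrence for the shift; and $V_p$ is visibly closed under addition. These are the only facts about $V_p$ that the argument uses. The conclusion then follows by induction on $\ell$. The base cases $\ell = 0, 1$, i.e.\ $D_0, D_1 \in V_p$, are the hypothesis. For $\ell \geqslant 2$, assume $D_{\ell-1}, D_{\ell-2} \in V_p$; by shift-invariance the sequence $i \mapsto D_{\ell-2}(i+1)$ lies in $V_p$, and by the three-term relation together with closure under addition, so does $D_\ell$. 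Thus $D_\ell \in V_p$ for all $\ell \geqslant 2$, which is exactly the claim.

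I expect the only delicate point to be the bookkeeping in the three-term relation: one must verify that the Pascal-like property is genuinely applicable at the entry $r_{\ell+i}^{i-1}$ and that the two resulting entries are correctly identified as $D_{\ell-1}(i)$ and the \emph{shifted} diagonal $D_{\ell-2}(i+1)$ rather than $D_{\ell-2}(i)$. Once that index computation is pinned down, everything after it is the routine observation that a linear-recurrence solution space is a shift-invariant subspace, and the induction closes immediately.
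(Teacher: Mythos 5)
Your proof is correct and follows essentially the same route as the paper: the paper's sketch establishes the $\ell=2$ case via $r_{i+2}^{i-1}=r_{i+1}^{i-1}+r_{i+1}^{i}$ (which is exactly your relation $D_2(i)=D_1(i)+D_0(i+1)$) and then appeals to induction, while you carry out that induction explicitly using the general three-term relation between diagonals together with closure of the solution space under shifts and sums. Your version is simply a more detailed write-up of the argument the paper only sketches, with the index bookkeeping (in particular the shift $D_{\ell-2}(i+1)$ and the reason the threshold is $\ell\geqslant 2$) correctly verified.
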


\begin{proof} We sketch the easy proof.
The Pascal-like property yields that $r^{i-1}_{i+2} =r^{i-1}_{i+1} + r^{i}_{i+1}$ for each $i \in \mathbb{N}$. It is clear that $(r^{i-1}_{i+2})_{i=1}^\infty$ must satisfy the same recursions as both $(r_{i}^{i-1})_{i=1}^\infty$ and $(r_{i+1}^{i-1})_{i=1}^\infty$. Using this observation as the base case for an inductive argument, the general case follows. 
\end{proof}

\begin{figure}[!t]
    \centering
    \includegraphics{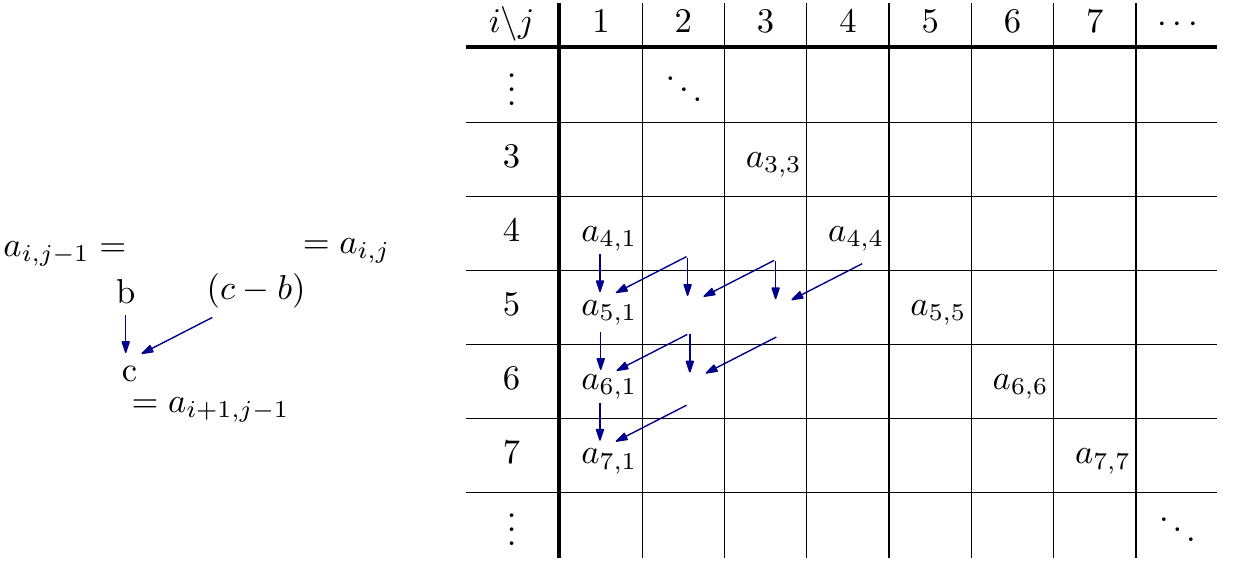}
    \caption{The goal is to relate the main diagonal entries with the first column entries. Knowing the entries $a_{4,1},a_{5,1},a_{6,1},a_{7,1}$ we can restore the value of $a_{4,4}$ using the equation $a_{i,j} = a_{i+1,j-1} - a_{i,j-1}$, which is illustrated on the left hand-side.}
    \label{fig:weighted}
\end{figure}

\begin{lemma}
Suppose that $\{r^j_{i}:i \in \N, j\in \{0,\cdots, i-1\}\}$ is \emph{Pascal-like} and $(r_{i}^{i-1})_{i=1}^\infty$ and $(r_{i+1}^{i-1})_{i=1}^\infty$ both satisfy a linear recurrence with characteristic polynomial $p(x)$. Then the sequence $(r^0_{i})_{i=1}^\infty$ satisfies the linear recurrence with characteristic polynomial $p(x(x-1))$.  
\label{got it right}
\end{lemma}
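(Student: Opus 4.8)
The plan is to reformulate everything using the shift operator $S$ on sequences, defined by $(Sx)_i = x_{i+1}$. Writing a linear recurrence with characteristic polynomial $q$ as $q(S)x = 0$, the conclusion to be proved is exactly $p(S(S-I))\,a = 0$, where $a = (r_i^0)_{i=1}^\infty$ is the first column. I therefore set $T := S(S-I)$, so that the goal becomes $p(T)a = 0$, while the hypotheses read $p(S)D = 0$ and $p(S)E = 0$ for the two main diagonals $D = (r_i^{i-1})_{i=1}^\infty$ and $E = (r_{i+1}^{i-1})_{i=1}^\infty$.

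First I would record the column description of the array. A one-line induction on $j$ from the Pascal-like relation, rearranged as $r_i^{j+1} = r_{i+1}^j - r_i^j$, shows that $r_i^j = ((S-I)^j a)_i$ for all $0 \le j \le i-1$; this is the computation underlying Lemma \ref{lem:pascal-fm}. Since $S$ and $S-I$ commute, $T^m = S^m (S-I)^m$, and hence $(T^m a)_i = ((S-I)^m a)_{i+m} = r_{i+m}^m$ for every $i \ge 1$ and $m \ge 0$. The crucial point is the choice $i \in \{1,2\}$: these coordinates land exactly on the two given diagonals, namely $(T^m a)_1 = r_{m+1}^m = D_{m+1}$ and $(T^m a)_2 = r_{m+2}^m = E_{m+1}$. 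This is the place where the substitution $x \mapsto x(x-1)$ enters: walking along the operator $T$ from the first column traces out lines parallel to the diagonals, and positions $1$ and $2$ sit on exactly the two diagonals we control.

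Now set $g := p(T)a$; the goal is $g = 0$. Writing $p(x) = \sum_s c_s x^s$ and commuting $T^m$ past $p(T)$, for each $m \ge 0$ I get $(T^m g)_1 = \sum_s c_s (T^{m+s}a)_1 = \sum_s c_s D_{m+s+1} = (p(S)D)_{m+1} = 0$, and the identical computation in the second coordinate gives $(T^m g)_2 = (p(S)E)_{m+1} = 0$. Thus both main diagonals of the Pascal-like array generated by the column $g$ vanish identically.

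The main obstacle is the final inversion: deducing $g = 0$ from the vanishing of these two diagonals. Writing out $(T^m g)_1 = \sum_{l=0}^m (-1)^{m-l}\binom{m}{l} g_{1+m+l}$ and $(T^m g)_2 = \sum_{l=0}^m (-1)^{m-l}\binom{m}{l} g_{2+m+l}$, the top-index term in each (at $l = m$, coefficient $1$) is $g_{2m+1}$, respectively $g_{2m+2}$. Hence $(T^m g)_1 = 0$ expresses $g_{2m+1}$ as a combination of $g_{m+1}, \dots, g_{2m}$, and $(T^m g)_2 = 0$ expresses $g_{2m+2}$ in terms of $g_{m+2}, \dots, g_{2m+1}$, all of strictly smaller index. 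Since $m=0$ already forces $g_1 = g_2 = 0$, a strong induction on the index then sweeps through all odd and even positions and forces $g \equiv 0$, that is, $p(T)a = 0$. Equivalently, one may invoke that a Pascal-like array is determined by its two main diagonals, so vanishing diagonals force the zero array and in particular $g = 0$; the explicit triangular recovery above is exactly what makes that determination effective, and I expect it to be the step requiring the most care.
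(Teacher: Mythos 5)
Your proof is correct, but it takes a genuinely different route from the paper's. The paper propagates the hypothesized recurrence from the two main diagonals to \emph{every} diagonal (Lemma \ref{any diag}), expresses each diagonal entry in terms of the first column via the alternating binomial formula (Lemma \ref{lem:pascal-fm}), and then identifies the resulting relation on the column with the recurrence for $p(x(x-1))$ by matching coefficients; in your notation this amounts to computing $(p(T)a)_{\ell+1}=\sum_i c_i r^i_{\ell+i+1}=0$ directly for each $\ell$, one diagonal per value of $\ell$. You instead use only the two given diagonals: after the (shared) identity $(T^m a)_i=r^m_{i+m}$, you form $g=p(T)a$, show that the Pascal-like array generated by $g$ has both main diagonals identically zero, and then invert. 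The trade-off is clean: you dispense with Lemma \ref{any diag} entirely, and the factorization $p(T)=p(S(S-I))=\sum_i d_i S^i$ makes the appearance of $p(x(x-1))$ immediate without the paper's formal substitution $r^0_{\ell+\alpha}\mapsto x^{\alpha-1}$; the price is the final injectivity step (a Pascal-like array with vanishing main diagonals is zero), which the paper asserts in passing (``determined by the two main diagonals'') but never needs. Your triangular recovery of $g_{2m+1}$ and $g_{2m+2}$ from lower-index terms, with base case $g_1=g_2=0$, closes that step correctly; one could equally argue row by row, since row $i$ consists of the two diagonal entries together with sums of entries from row $i-1$. Both arguments are sound, and yours is arguably the more conceptual packaging of the same binomial identity.
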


\begin{proof} 
Let $p(x) = c_kx^k + c_{k-1}x^{k-1} + \cdots c_1x+ c_0$. Let $(d_i)_{i=0}^{2k}$ be defined by $p(x(x-1))=\sum_{i=0}^{2k}d_{i}x^{i}$.
It suffices to show that for each non-negative integer $\ell$
\begin{equation}
    \sum_{i=1}^{2k+1}d_{i-1}r^0_{\ell+i} =0. \label{yes zero}
\end{equation}
Fix some non-negative integer $\ell$. It follows from the definition that $\{r^j_{\ell+i}: i \in \mathbb{N}, j \in \{0, \cdots,i-1\}\}$ is Pascal-like. Therefore, by Lemma \ref{any diag}, we know that $\sum_{i=0}^kc_ir^i_{\ell+i+1} = 0$. Using Lemma \ref{lem:pascal-fm}, we rewrite this sum as 
\begin{equation}
0=\sum_{i=0}^kc_ir^i_{\ell+i+1}= \sum_{i=0}^k c_i \sum_{j=0}^{i} (-1)^{i-j} \binom{i}{j} r^0_{\ell+i+1+j}. \label{its zero} 
\end{equation}
Since (\ref{its zero}) is satisfied, we know there are coefficients  $(d'_i)_{i=0}^{2k}$ so that $\sum_{i=1}^{2k+1}d'_{i-1}r^0_{\ell+i}=0$ (for each non-negative integer $\ell$). We wish to show that $d'_i=d_i$. Rather than a messy calculation, a  simple way to see this is by converting to the characteristic polynomial and matching the coefficients. That is, replace $r^0_{\ell+\alpha}$ by $x^{\alpha-1}$ in (\ref{its zero}) and apply the binomial theorem to obtain
$$\sum_{i=0}^k c_i \sum_{j=0}^{i} (-1)^{i-j} \binom{i}{j} x^{i+j}= \sum_{i=0}^k c_i x^i(x-1)^i= p(x(x-1))=\sum_{i=0}^{2k}d_{i}x^{i}.$$
Therefore $d'_i=d_i$ as desired.
\end{proof}

\begin{lemma}
Let 
$\{r^j_{i}:i \in \N, j\in \{0,\cdots, i-1\}\}$ be Pascal-like so that $(r_{i}^{i-1})_{i=1}^\infty$ and $(r_{i+1}^{i-1})_{i=1}^\infty$ both satisfy a linear recurrence with characteristic polynomial $p(x)$. Let $(d_i)_{i=0}^{2k}$ be the coefficients of the polynomial $p(x(x-1))$ and $t_j = \sum_{i=1}^j r^0_{i}$ for $j\in \mathbb{N}$. Suppose
 \begin{equation}
   \sum_{j=1}^{2k+1}d_{j-1} t_j=0. \label{n=0 case}
  \end{equation}
 Then $(t_j)_{j=1}^\infty$ is a linear recurrence relation with characteristic polynomial $p(x(x-1))$.
 \label{mainlemma}
\end{lemma}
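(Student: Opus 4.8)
The plan is to exploit the standard principle that forming partial sums multiplies a characteristic polynomial by the factor $(x-1)$, and then to use the hypothesis (\ref{n=0 case}) to cancel exactly that spurious factor. Write $q(x) := p(x(x-1)) = \sum_{i=0}^{2k} d_i x^i$. By Lemma~\ref{got it right}, the first column $(r^0_i)_{i=1}^\infty$ already satisfies the recurrence with characteristic polynomial $q$; concretely, (\ref{yes zero}) gives $\sum_{i=0}^{2k} d_i\, r^0_{\ell+i+1} = 0$ for every non-negative integer $\ell$. The goal is to upgrade this to the same recurrence for the partial sums, that is, to show $\sum_{i=0}^{2k} d_i\, t_{n+i} = 0$ for every $n \in \mathbb{N}$.

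To this end I would define $u_n := \sum_{i=0}^{2k} d_i\, t_{n+i}$ for $n \in \mathbb{N}$, so that the desired conclusion is precisely $u_n = 0$ for all $n$. The first step is to compute the forward difference $u_{n+1} - u_n$. Since $t_m - t_{m-1} = r^0_m$, we obtain
\[
u_{n+1} - u_n = \sum_{i=0}^{2k} d_i\,(t_{n+1+i} - t_{n+i}) = \sum_{i=0}^{2k} d_i\, r^0_{n+1+i},
\]
and the right-hand side is exactly the instance $\ell = n$ of the column recurrence (\ref{yes zero}), hence equals $0$. Therefore $u_{n+1} = u_n$ for every $n \in \mathbb{N}$, so the sequence $(u_n)_{n=1}^\infty$ is constant.

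It then remains to identify this constant and check that it is zero. Evaluating at $n = 1$ and reindexing by $j = i+1$ gives $u_1 = \sum_{i=0}^{2k} d_i\, t_{1+i} = \sum_{j=1}^{2k+1} d_{j-1}\, t_j$, which is exactly the quantity assumed to vanish in (\ref{n=0 case}). Hence $u_n = u_1 = 0$ for all $n \in \mathbb{N}$, which is the claimed recurrence with characteristic polynomial $p(x(x-1))$.

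The conceptual point, and the only place the extra hypothesis enters, is the degree bookkeeping. In general partial summation produces a sequence satisfying the recurrence with characteristic polynomial $(x-1)q(x)$ of degree $2k+1$ rather than $q(x)$; the telescoping computation above shows that the ``defect'' $u_n$ is at worst a constant, and condition (\ref{n=0 case}) is precisely the single scalar constraint that forces this constant to vanish, thereby stripping off the extra root at $x=1$ and lowering the degree back to $2k$. Thus I expect the main obstacle to be not the algebra, which telescopes cleanly, but rather recognizing that (\ref{n=0 case}) is exactly the initial-value condition needed to remove the factor $(x-1)$ and recover the sharper recurrence.
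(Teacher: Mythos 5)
Your proposal is correct and is essentially the paper's own argument: the paper proves $\sum_{j=1}^{2k+1}d_{j-1}t_{\ell+j}=0$ by induction on $\ell$, and the inductive step is exactly your telescoping identity $u_{\ell+1}-u_{\ell}=\sum_{j=1}^{2k+1}d_{j-1}r^0_{\ell+1+j}=0$ via Lemma~\ref{got it right}, with hypothesis (\ref{n=0 case}) supplying the base case. Framing it as ``$(u_n)$ is constant and $u_1=0$'' rather than as an explicit induction is only a cosmetic difference.
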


\begin{proof}
Set all the notation as in the statement of the lemma. We proceed by induction to show that for each non-negative integer $\ell$. 
\begin{equation}   \sum_{j=1}^{2k+1}d_{j-1} t_{\ell+j}=0 
\label{inductionpartial}
\end{equation}
The base case of the induction $\ell=0$ is assumed -- (\ref{n=0 case}). 

Suppose that for some $\ell \geqslant 0$, (\ref{inductionpartial}) holds. We will show that (\ref{inductionpartial}) holds for $\ell+1$. 
By \cref{got it right}, $(r^0_i)_{i=1}^\infty$ satisfies the recurrence relation with characteristic polynomial $p(x(x-1))$. Using this fact and the induction hypothesis, we have the following 
\begin{equation}
    \begin{split}
     \sum_{j=1}^{2k+1}d_{j-1} t_{\ell+1+j} & = \sum_{i=1}^{2k+1}d_{j-1} (t_{\ell+1+j}-t_{\ell+j}) +\sum_{i=1}^{2k+1}d_{j-1} t_{\ell+j}\\
     & =\sum_{j=1}^{2k+1}d_{j-1} r^0_{\ell+1+j} +\sum_{j=1}^{2k+1}d_{j-1} t_{\ell+j}=0 +0 =0   
    \end{split}
\end{equation}
This completes the proof.
\end{proof}

\section{Proof of Theorem \ref{thm:lin-rec}}

In the previous section, we made some general observations on Pascal-like sets.
Now, in order to use them, we show that the numbers $r_{k,n}^d$ form Pascal-like sets -- as in (O1).
Next, in order to formally prove (O3), we prove that (\ref{bigdeal}) holds in general.
    
 \begin{lemma} \label{move tables}
     Let $n,k \in \mathbb{N}$. Then
          \[r_{k+1,n+1}^{n}  = t_{k,n}:=  \sum_{i=1}^{n} r_{k,i}^0.\]
    \end{lemma}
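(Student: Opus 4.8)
The plan is to set up an explicit bijection between $\calR_{k+1,n+1}^n$ and the family of all maximal $k$-Schreier sets whose maximum is at most $n$, given by the mutually inverse maps $F \mapsto F \setminus \{n+1\}$ and $F' \mapsto F' \cup \{n+1\}$. Counting the latter family according to its maximum element then yields $\sum_{i=1}^n r_{k,i}^0 = t_{k,n}$, because by observation (2) the set $\calR_{k,i}^0$ is precisely the collection of maximal $k$-Schreier sets with maximum $i$.

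The first step I would isolate is an explicit formula for the ``room'' in the last part of the greedy decomposition: for $F \in (k+1)\calS$ with $E := E_{k+1}(F) \neq \emptyset$, one has $d_{k+1}(F) = \min E - |E|$. The reason is that $E_{k+1}(F) \neq \emptyset$ together with $F \in (k+1)\calS$ forces exactly $k+1$ non-empty parts, with $E_1(F),\dots,E_k(F)$ maximal and $E$ the last. For any $G$ with $F < G$, appending $G$ leaves the initial maximal segments untouched, so $E_i(F \cup G) = E_i(F)$ for $i \le k$ and $E_{k+1}(F \cup G) = E_1(E \cup G)$; hence $F \cup G \in (k+1)\calS$ exactly when $E \cup G$ is a single Schreier set, i.e. $|E| + |G| \le \min E$. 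The largest admissible $|G|$ is thus $\min E - |E|$, attained by an interval placed immediately after $\max F$.

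With this formula the forward direction is immediate. If $F \in \calR_{k+1,n+1}^n$, then $d_{k+1}(F) = n < \infty$, so $E := E_{k+1}(F) \neq \emptyset$ and $\max E = \max F = n+1$; the formula gives $\min E - |E| = n = (n+1)-1$, and since $\min E \le \max E = n+1$ and $|E| \ge 1$, equality forces $E = \{n+1\}$. Then $F \setminus \{n+1\} = E_1(F) \cup \dots \cup E_k(F)$ is a disjoint union of $k$ successive maximal Schreier sets with maximum $\max E_k(F) < n+1$, i.e. a maximal $k$-Schreier set with maximum at most $n$. For the reverse direction I would take a maximal $k$-Schreier set $F'$ with $\max F' \le n$ and set $F = F' \cup \{n+1\}$. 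Since $n+1 > \max F'$, the greedy decomposition gives $E_i(F) = E_i(F')$ for $i \le k$ and $E_{k+1}(F) = E_1(\{n+1\}) = \{n+1\}$ (the singleton being Schreier but not maximal, as $n \ge 1$); hence $F \in (k+1)\calS$ with $E_{k+1}(F) \neq \emptyset$, and the formula yields $d_{k+1}(F) = (n+1)-1 = n$, so $F \in \calR_{k+1,n+1}^n$.

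I expect the only delicate point to be the bookkeeping behind the room formula $d_{k+1}(F) = \min E - |E|$ — specifically, confirming that elements appended after $\max F$ do not perturb the first $k$ greedy parts and interact only with the final Schreier set. Once that is in place, the remainder is just reading off that both sides of the claimed identity enumerate the maximal $k$-Schreier sets with maximum at most $n$, partitioned by their maximum element.
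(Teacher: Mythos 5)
Your proposal is correct and follows essentially the same route as the paper: both establish the bijection $F \mapsto F\setminus\{n+1\}$ between $\calR_{k+1,n+1}^{n}$ and the maximal $k$-Schreier sets with maximum at most $n$, the latter being $\bigcup_{i=1}^{n}\calR_{k,i}^{0}$. Your ``room formula'' $d_{k+1}(F)=\min E_{k+1}(F)-|E_{k+1}(F)|$ just makes explicit the verification that the paper leaves as ``clearly bijective.''
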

    \begin{proof}

        First, recall that the upper index $0$ in the expression $r_{k,n}^0$ indicates counting maximal $k\mathcal{S}$ sets. Observe that every element of  $\calR_{k+1,n+1}^{n}$ is the union of two sets: The first is a maximal $k\calS$ set with a maximum at most $n-1$. The second is $\{n+1\}$. Consider the following map
            \[ \calR_{k+1,n+1}^{n} \ni F \mapsto F \backslash \{n+1\} \in \bigcup_{i=1}^{n} \calR_{k,i}^0.\]
        The map is clearly bijective ("$i$" can be interpreted as the second largest element of $F$) and the families under the union in the co-domain are pairwise disjoint.
              \end{proof}        

    \begin{lemma} \label{lem:pascal-rec} 
        Let $k \in \N$, $n \in \N$ with $n\geqslant 3$, and $d \in \{0,\cdots, n-3\}$. Then 
            \[ r_{k,n}^d = r_{k,n-1}^d + r_{k,n-1}^{d+1}. \]
        Hence, the set $\{r_{k,i}^{j}:i\in \mathbb{N}, j\in \{0,\cdots i-1\}\}$ is a Pascal-like. Moreover, $r^{j-1}_{k,j+1}=0$ for each $j \in \mathbb{N}$.
    \end{lemma}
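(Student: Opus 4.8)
The plan is to handle the two assertions separately: the \emph{moreover} statement $r^{j-1}_{k,j+1}=0$ is a short non-existence argument, while the Pascal-like identity is proved by a bijection that splits $\calR_{k,n}^d$ according to whether $n-1\in F$. The workhorse throughout will be the identity
\[
d_k(F) = \min E_k(F) - |E_k(F)| \qquad \text{whenever } E_k(F)\neq\emptyset,
\]
which I would record first. It follows from the greedy partition: for $F\in k\calS$ with $E_k(F)\neq\emptyset$, the sets $E_1(F),\dots,E_{k-1}(F)$ are maximal Schreier and $E_k(F)$ is the last (possibly non-maximal) piece carrying $\max F$; any extension $G>F$ keeping $F\cup G\in k\calS$ must be absorbed into this last piece, and $E_k(F)\cup G$ is Schreier exactly when $|E_k(F)|+|G|\le \min E_k(F)$.

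For the \emph{moreover} part, suppose toward a contradiction that $F\in\calR^{j-1}_{k,j+1}$. Then $\max E_k(F)=j+1$ and the identity gives $\min E_k(F) = (j-1)+|E_k(F)|$. Since $E_k(F)$ consists of $|E_k(F)|$ distinct elements in $[\min E_k(F),\, j+1]$, we get $j+1 \ge (j-2)+2|E_k(F)|$, forcing $|E_k(F)|=1$; but then $\min E_k(F)=j+1$ and $d_k(F)=j\neq j-1$. Hence $\calR^{j-1}_{k,j+1}=\emptyset$.

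For the recursion, fix $n\ge 3$ and $d\le n-3$, and split $\calR^d_{k,n} = \calA\sqcup\calB$ with $\calA=\{F: n-1\notin F\}$ and $\calB=\{F: n-1\in F\}$. The bound $d\le n-3$ guarantees $|E_k(F)|\ge 2$ (otherwise $E_k(F)=\{n\}$ and $d_k(F)=n-1$), which is what makes the surgeries below preserve $\min E_k(F)$ and leave $E_1,\dots,E_{k-1}$ untouched. On $\calB$ I would use $\psi(F)=F\setminus\{n\}$: deleting the top element shrinks $E_k(F)$ by one without changing its minimum, so $\max\psi(F)=n-1$ and $d_k(\psi(F))=d+1$, giving a bijection $\calB\to\calR^{d+1}_{k,n-1}$ with inverse $F'\mapsto F'\cup\{n\}$ (here $n-1=\max F'\in F'$ automatically, and $d_k(F')=d+1\ge1$ makes $E_k(F')\cup\{n\}$ Schreier). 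On $\calA$ I would use $\phi(F)=(F\setminus\{n\})\cup\{n-1\}$; because $n-1\notin F$ the second-largest element of $E_k(F)$ is at most $n-2$, so this preserves both $|E_k(F)|$ and $\min E_k(F)$, whence $\max\phi(F)=n-1$ and $d_k(\phi(F))=d$, giving a bijection $\calA\to\calR^d_{k,n-1}$ with inverse $F'\mapsto (F'\setminus\{n-1\})\cup\{n\}$. Summing the two cardinalities yields $r^d_{k,n}=r^d_{k,n-1}+r^{d+1}_{k,n-1}$.

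The main obstacle is the bookkeeping on the greedy partition: one must verify that each surgery touches only the top piece $E_k$, that $E_1,\dots,E_{k-1}$ are \emph{literally} unchanged (so that membership in $k\calS$ and the value of $k$ are preserved), and that the new last piece is exactly the one the greedy rule would produce, i.e.\ that it remains a single Schreier set rather than splitting. Each of these reduces to the single quantitative fact $|E_k(F)|\ge 2$, which is precisely where the hypothesis $d\le n-3$ enters; I would isolate this as a small claim, after which both inverse checks become routine.
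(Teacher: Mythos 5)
Your proposal is correct and follows essentially the same route as the paper: the same split of $\calR^d_{k,n}$ according to whether $n-1\in F$, the same two surgeries $F\mapsto F\setminus\{n\}$ and $F\mapsto (F\setminus\{n\})\cup\{n-1\}$, and the same use of the hypothesis $d\le n-3$ to force $|E_k(F)|\ge 2$; your explicit identity $d_k(F)=\min E_k(F)-|E_k(F)|$ just makes precise the bookkeeping the paper leaves as ``easily checked,'' and your pigeonhole argument for $\calR^{j-1}_{k,j+1}=\emptyset$ is the same dichotomy (singleton versus non-singleton $E_k(F)$) as the paper's case analysis on $\min E_k(F)$.
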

    \begin{proof}
         Define
        \begin{align*}
            \calT_1 := \{F \in \calR_{k,n}^{d} \ : \ n-1 \in F\}, &  \ \ \ \ \ \ \calT_2 := \{F \in \calR_{k,n}^{d} \ : \ n-1 \notin F\},\\
            \calT_1' := \{F \backslash \{n\} \ : \ F \in \calT_1 \}, & \ \ \ \ \ \ \calT_2' := \{F \backslash \{n\} \cup \{n-1\} \ : \ F \in \calT_2 \}.
        \end{align*}
        Clearly, we have $|\calT_1|=|\calT_1'|$ and $|\calT_2|=|\calT_2'|$. Hence, $r_{k,n}^d = |\calT_1| + |\calT_2| = |\calT_1'| + |\calT_2'|$. Consider some $F \in \calR_{k,n}^d$, we claim that $|E_k(F)| > 1$. If $E_k(F) = \{n\}$, then $d_k(F) = n - 1$, but we assumed that $d < n-1$, which is a contradiction. Comparing the sets $E_k(F)$ and $E_k(F')$, where $F'$ is the set $F$ modified in one of two presented ways, it can be easily checked that $\calT_1' = \calR_{k,n-1}^{d+1}$ and $\calT_2' = \calR_{k,n-1}^d$. This concludes the proof of the first part.

        We will show that for each $n \in \mathbb{N}$, we have $\mathcal{R}_{k,n+1}^{n-1}=\emptyset$. We know that if $F \in \mathcal{R}_{k,n+1}^{n-1}$ we have $n+1 \in E_k(F)$ and, by definition, $d_k(F)=n-1$. We obtain contradictions for all possible values of $\min E_k(F)$. If $\min E_k(F)\leqslant n$, using that $n+1\in E_k(F)$, we have $d_k(F)\leqslant n-2$. If $n+1=\min E_k(F)$, then $d_k(F)= n$.
    \end{proof}
    

    
    \begin{lemma}\label{lem:s=2s+r}
    Let $k,n \in \N$ with $n \geqslant 2$. We have
    $$s_{k,n}=2s_{k,n-1}-r_{k,n-1}^0.$$
    \end{lemma}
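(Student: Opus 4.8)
The plan is to split $(k\calS)^n$ according to whether the element $n-1$ belongs to $F$, and to match each of the two resulting pieces with $(k\calS)^{n-1}$. Write $(k\calS)^n = \calA \sqcup \calB$, where $\calA = \{F \in (k\calS)^n : n-1 \in F\}$ and $\calB = \{F \in (k\calS)^n : n-1 \notin F\}$. I will show $|\calB| = s_{k,n-1}$ and $|\calA| = s_{k,n-1} - r_{k,n-1}^0$; adding these gives the claim. The structural fact underlying both counts, which I would isolate as a short preliminary observation, is that membership in $k\calS$ depends only on the cardinalities and minima of the blocks $E_1(\cdot), E_2(\cdot),\ldots$ of the canonical decomposition. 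Since the greedy decomposition reads elements in increasing order, altering only the largest element of a set, while keeping it the largest, leaves every block and in particular every block minimum and cardinality unchanged; hence it preserves membership in $k\calS$.

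For $\calB$, I would use the map $F \mapsto (F \setminus \{n\}) \cup \{n-1\}$, which replaces the maximum $n$ of $F$ by $n-1$; this is legitimate because $n-1 \notin F$ forces $n-1$ to exceed every other element of $F$. By the principle above, the image lies in $(k\calS)^{n-1}$, and the inverse $H \mapsto (H \setminus \{n-1\}) \cup \{n\}$ is well defined for every $H \in (k\calS)^{n-1}$ (as $n-1 = \max H$), so the map is a bijection $\calB \to (k\calS)^{n-1}$ and $|\calB| = s_{k,n-1}$.

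For $\calA$, I would use $F \mapsto F \setminus \{n\}$. Since $k\calS$ is hereditary (a subset of a union of at most $k$ Schreier sets is again such a union, by intersecting each Schreier set with the subset) and $n-1 \in F$, this lands in $(k\calS)^{n-1}$ and is clearly injective, with image $\{G \in (k\calS)^{n-1} : G \cup \{n\} \in k\calS\}$. It remains to identify this image. If $G \cup \{n\} \in k\calS$, then $J = \{n\}$ witnesses $d_k(G) \geq 1$ directly from the definition of $d_k$; conversely, if $d_k(G) \geq 1$, then $G$ admits some one-element extension lying in $k\calS$, and by the same ``only the top element changes'' principle the specific extension $G \cup \{n\}$ also lies in $k\calS$. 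Thus $G \cup \{n\} \in k\calS \iff d_k(G) \geq 1 \iff d_k(G) \neq 0$, and by the recorded observation that $d_k(G)=0$ characterizes the maximal $k$-Schreier sets, the excluded sets are exactly those counted by $r_{k,n-1}^0$. Hence $|\calA| = s_{k,n-1} - r_{k,n-1}^0$, and summing the two pieces yields $s_{k,n} = 2s_{k,n-1} - r_{k,n-1}^0$.

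I expect the main obstacle to be making the ``only block sizes and minima matter'' principle fully rigorous, in particular verifying that replacing or appending the largest element never perturbs the greedy decomposition of the earlier blocks, including the small edge cases where the whole set forms a single block (so that the maximum sits among the first $\min F$ elements). Once this is settled, the bijection for $\calB$ and the equivalence $G \cup \{n\} \in k\calS \iff d_k(G) \geq 1$ for $\calA$ both follow cleanly, and the final arithmetic is immediate.
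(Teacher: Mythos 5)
Your proof is correct and follows essentially the same route as the paper's: both partition $(k\calS)^n$ according to whether $n-1$ belongs to the set, biject the sets omitting $n-1$ with all of $(k\calS)^{n-1}$ via swapping the top element, and biject the sets containing $n-1$ with the non-maximal members of $(k\calS)^{n-1}$ via deleting $n$. The only difference is presentational (you describe the maps from level $n$ down to level $n-1$, the paper defines the two classes as images of maps going upward), and your explicit reduction of $G\cup\{n\}\in k\calS$ to $d_k(G)\geqslant 1$ is a slightly more careful rendering of the step the paper dispatches with ``adding one element larger than the maximum to a non-maximal $k$-Schreier set produces a $k$-Schreier set.''
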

    \begin{proof}
         Define
        \begin{align*}
            \calT_1 &:= \{F \cup \{n\} \ : \ F \in (k\calS)^{n-1} \backslash \calR_{k,n-1}^0 \}, \\
            \calT_2 &:= \{F \backslash \{n-1\} \cup \{n\} \ : \ F \in (k\calS)^{n-1} \}.
        \end{align*}
        We claim that $(k\calS)^n = \calT_1 \cup \calT_2$. Then, by the fact that $\calT_1 \cap \calT_2 = \emptyset$, we obtain 
        \[s_{k,n} = |(k\calS)^n| = |\calT_1| + |\calT_2| = (s_{k,n-1} - r_{k,n-1}^0) + s_{k,n-1}.\]
        
        For each $G \in (k\calS)^n$ we have $F = G \backslash \{n\} \cup \{n - 1\} \in (k\calS)^{n-1}$.
        If $n-1 \in G$, $F=G\setminus \{n\}$. Since $G \in (k\mathcal{S})^n$, $F$ is not a maximal $k$-Schreier set. Hence, $G \in T_1$. If $n-1 \not \in G$, $G \in T_2$. It follows that $(k\calS)^n \subset \calT_1 \cup \calT_2$. The inclusion $\calT_2 \subset (k\calS)^n$ is obvious. It suffices to prove that $\calT_1 \subset (k\calS)^n$, however, this is almost obvious -- adding one element larger than the maximum to a non-maximal $k$-Schreier produces a $k$-Schreier set.
    \end{proof}

We are now ready to proceed to the proof of Theorem \ref{thm:lin-rec}.

\begin{proof}[Proof of Theorem \ref{thm:lin-rec}]

We claim that for each $k \in \mathbb{N}$, the sequence $(s_{k,n})$ satisfies a linear recurrence with characteristic polynomial $p_k(x)$. This claim has already been established in previous work for $k=1$; however, we will not use this statement directly. Instead, we prove the following statement holds for each natural number $k$.
\begin{itemize}
    \item[(P$_k$):] The sequences  $(r^0_{k,i})_{i=1}^\infty, (t_{k,i})_{i=1}^\infty$ satisfy a linear recurrence with characteristic polynomial $p_k(x)$.
\end{itemize}
Consider the base case $k=1$. Note that $r_{1,n}^{n-1}=1$ for each $n\in \mathbb{N}$ since $F \in \mathcal{R}_{1,n}^{n-1}$ if and only if $F=\{n\}$. Therefore $(r_{1,n}^{n-1})$ satisfies a linear recurrence with characteristic polynomial $p_0(x)=x-1$ and the same holds for $(r_{1,n+1}^{n-1})_{n=1}^\infty$ since this sequence is identically 0 (Lemma \ref{lem:pascal-rec}). Therefore by Lemma \ref{got it right}, $(r^0_{1,i})_{i=1}^\infty$ satisfies a linear recurrence with characteristic polynomial $p_0(x(x-1))=x(x-1)-1=p_1(x)$. Notice that (\ref{n=0 case}) is satisfied by Lemma \ref{initialsums} for $k=1$. Therefore we may apply Lemma \ref{mainlemma} to see that $(t_{1,i})_{i=1}^\infty$ satisfies a linear recurrence with characteristic polynomial $p_1(x)$. Therefore (P$_1$) holds.

Fix a positive integer $k$ and assume that (P$_k$)
holds; we will prove that (P$_{k+1}$) holds. 
By Lemma \ref{move tables}, we have that
$t_{k,i}=r_{k+1,i+1}^i$ for each $i \in \mathbb{N}$. Therefore, $(r_{k+1,i+1}^i)_{i=1}^\infty$ satisfies a linear recurrence with characteristic polynomial $p_k(x)$. We want to show that $(r_{k+1,i}^{i-1})_{i=1}^\infty$ satisfies a linear recurrence with characteristic polynomial $p_k(x)$; this is just the sequence in the previous sentence starting with $r_{k+1,1}^0$. Notice that $r_{k+1,1}^0=0$. Therefore, by Lemma \ref{initialsums},
$$\sum_{i=0}^{2^k} d^k_{i}r_{k+1,i+1}^i = \sum_{i=1}^{2^k} d^k_{i}t_{k,i}=0.$$
Therefore the initial conditions are satisfied and so $(r_{k+1,i}^{i-1})_{i=1}^\infty$ satisfies a linear recurrence with characteristic polynomial $p_k(x)$.

By Lemma \ref{lem:pascal-rec}, we have $r_{k+1,i+1}^{i-1}=0$ for each $i \in \mathbb{N}$. Thus we may apply Lemma \ref{got it right} to conclude that $(r^0_{k+1,i})_{i=1}^\infty$ satisfies a linear recurrence with characteristic polynomial $p_{k+1}(x)$. In addition, by Lemma \ref{initialsums}, the assumptions of Lemma \ref{mainlemma} are satisfied, and obtain that $(t_{k+1,i})_{i=1}^\infty$ satisfies a linear recurrence with characteristic polynomial $p_{k+1}(x)$. This concludes the inductive step.

Fix $k\in \mathbb{N}$. It remains to prove that $(s_{k,n})_{n=1}^\infty$ satisfies a linear recurrence with characteristic polynomial $p_{k}(x)$. It suffices to show that for each non-negative integer $\ell$, we have
\begin{equation}
    \sum_{i=0}^{2^k}d^k_{i} s_{k,\ell+i+1} =0.\label{almost done}
\end{equation}
By Lemma \ref{initialsums}, the $\ell=0$ case holds. Suppose (\ref{almost done}) holds for some non-negative $\ell$. Using Lemma~\ref{lem:s=2s+r}, and the statement (P$_k$), we have
\begin{equation}
    \sum_{i=0}^{2^k}d^k_{i} s_{k,\ell+i+2} = 2\sum_{i=0}^{2^k}d^k_{i} s_{k,\ell+i+1 } - \sum_{i=0}^{2^k} d^k_i r^0_{k,\ell+i+1} =0-0.
\end{equation}
This finishes the proof of the theorem. Note that Corollary \ref{only cor} follows from the initial induction in the proof -- statements (P$_k$).
\end{proof}

\section{Concluding remarks}
 
It seems there are many interesting open problems similar to the one we studied. For example, there is another important regular family $\calS_2$, which is the convolution of $\calS$ with itself. That is, a subset of natural numbers $F$ is in $\calS_2$ if there exist disjoint nonempty sets $E_1,\dots,E_\ell \in \calS$ such that
        \[ \bigcup_{i=1}^\ell E_i = F, \ \ \mathrm{and} \ \ \set{\min E_i : i \in \{1, \ldots, \ell\} } \in \calS. \]
    The family $\calS_2$ also appears naturally in Banach space theory \cite{Alspach-Argyros}.
    
    \begin{problem}
        Is the sequence $(|\calS_2^n|)_{n=1}^\infty$ a linear recurrence sequence? If yes, then what is the recursive relation?
    \end{problem}

	\bibliographystyle{abbrv}
	\bibliography{bib_source1}

\end{document}